\theoremstyle{plain}
\newtheorem{thm}{Theorem}
\numberwithin{thm}{section}
\newtheorem{prob}{Problem}
\newtheorem{cor}[thm]{Corollary}
\newtheorem{con}[thm]{Conjecture}
\theoremstyle{definition}
\newtheorem{ex}[thm]{Example}
\newtheorem{rem}[thm]{Remark}
\def\C{\mathbb{C}}
\def\R{\mathbb{R}}
\def\Q{\mathbb{Q}}
\newcommand{\cH}{\mathscr{H}}
\begin{document}

\bibliographystyle{abbrv}

\title[Computing totally real hyperplane sections]{Computing totally real hyperplane sections and linear
series on algebraic curves}

\author{Huu Phuoc Le}
\email{huu-phuoc.le@lip6.fr}
\address{Sorbonne Université, LIP6, CNRS, Équipe PolSys, F-75252, Paris Cedex 05, France}

\author{Dimitri Manevich}
\email{dimitri.manevich@math.tu-dortmund.de}
\address{Fakultät für Mathematik, Technische Universität Dortmund, D-44227 Dortmund, Germany}

\author{Daniel Plaumann}
\email{daniel.plaumann@math.tu-dortmund.de}
\address{Fakultät für Mathematik, Technische Universität Dortmund, D-44227 Dortmund, Germany}

\begin{abstract}
Given a real algebraic curve, embedded in projective space, we study
the computational problem of deciding whether there exists a
hyperplane meeting the curve in real points only. More generally,
given any divisor on such a curve, we may ask whether the
corresponding linear series contains an effective divisor with totally
real support. This translates into a particular type of parametrized
real root counting problem that we wish to solve exactly. On the other
hand, it is known that for a given genus and number of real connected
components, any linear series of sufficiently large degree contains a
totally real effective divisor. Using the algorithms described in this
paper, we solve a number of examples, which we can compare to the best
known bounds for the required degree.
\end{abstract}

\keywords{
real algebraic curve, totally real hyperplane section, divisor,
 Hermite matrix, parametrized root counting}

 \subjclass[2020]{Primary 14Q05; Secondary 14P05, 13P15,  68W30}

\maketitle

\section*{Introduction}
Given a real algebraic curve $X$ of degree $d$ embedded into some
projective space, we consider the computational problem of deciding
whether there exists a real hyperplane meeting $X$ in a prescribed
number $r$ of real points, counted with multiplicity. Of particular
interest is the case $r=d$, i.e., hyperplanes meeting $X$ in real
points only. More generally, given any divisor $D$ on $X$ defined over
$\mathbb{R}$, and thus consisting of real points and complex-conjugate
pairs, we may ask whether the linear series $|D|$ contains an
effective divisor with totally real support. (The first question is
the special case when $D$ is a hyperplane section of a suitably
embedded curve.)
\begin{figure}[ht]
  \centering \includegraphics[width=6cm]{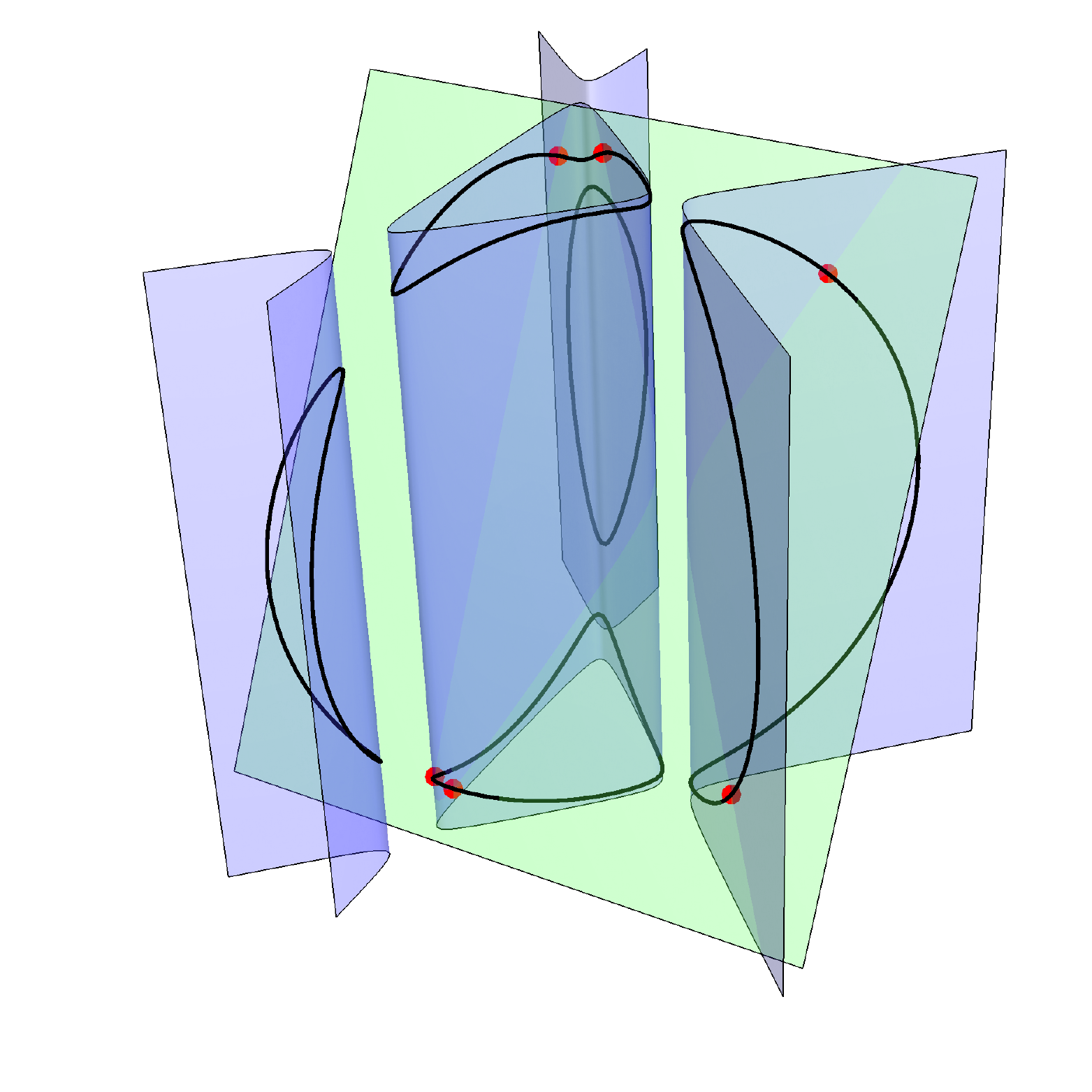}\\
  \caption{A real space curve of degree $6$ with a totally real hyperplane section.}
\end{figure}
A number of general results have been obtained in this direction: 
The answer is known to be positive for any divisor of
sufficiently high degree (see \cite{krasnov} and
\cite{scheiderer}). However, the precise degree required, relative to
the genus of $X$, is the subject of several results and conjectures,
some of which we will investigate from a computational point of
view. Explicit bounds are only known if the real locus $X(\mathbb{R})$
has many connected components (so-called $M$-curves or
$(M-1)$-curves), by results due to Huisman \cite{huisman1} and Monnier
\cite{monnier}. On the other hand, very little is known about curves
whose number of connected components is not close to maximal. Of
course, the computational problem makes sense for any given curve and
divisor, regardless of whether or not there is a general result
covering all curves and divisors of the given kind.

It comes down to ``solving'' polynomial systems whose coefficients
depend on parameters. More precisely, we consider the coefficients of
the equation defining the hyperplane as \emph{parameters}. One then
associates a hyperplane to a point in the space of parameters. The
number of real points at the intersection of the considered hyperplane
with the curve may vary depending on the parameters, while the number
of complex intersection points between the curve and the hyperplane is
equal to the degree $d$ for \emph{generic} values of the
parameters. (If the points are counted with intersection
multiplicities and the curve is not contained in a hyperplane, this
complex intersection number is equal to $d$ for \textit{all} values of
the parameters.)  Hence, from a computational point of view, we are
considering a polynomial system, depending on parameters such that,
when these parameters take generic values, the solution set over the
complex numbers is finite. When the input system generates a radical
ideal, the algorithm we use, which is detailed in \cite{LeSa20},
computes a partition of a dense semi-algebraic subset of the space of
parameters into open semi-algebraic sets such that the number of real
\emph{simple} solutions (i.e., without multiplicities) to the input
system is invariant for any point chosen in one of these sets. To do
this, we compute a symmetric matrix called the \emph{parametric
Hermite matrix}, whose entries are polynomials depending on the
parameters and such that, after specialization, its signature
coincides with the number of real solutions to the specialized
system. This allows us to classify the possible number of real roots
to the input system with respect to the parameters. 

Our main findings can be summarized as follows.

  \begin{itemize}
    \item[1.] There exist canonical curves $X$ in $\mathbb{P}^3$ with one or two ovals which do not allow simple totally real hyperplane sections (Example \ref{ex1p3}).
    \item[2.] There exists a curve $X$ in $\mathbb{P}^3$ of genus two and degree five having one oval which does not allow a simple totally real hyperplane section (Example \ref{exvar}).
    \item[3.] There are infinitely many plane quartics $X$ with many ovals possessing a (complete) linear series of degree four which does not contain a totally real divisor (Example \ref{quarticex}).
    \item[4.] For every $d \geq 3$ and every number $1 \leq s \leq g+1$ with
    $g=\frac{(d-1)(d-2)}{2}$, there exists a plane curve $X$ of degree
    $d$, genus $g$ and having $s$ branches such that the linear series of
    lines $\vert L \vert$ is totally real (Theorem \ref{Thm:RealHarnack}).
  \end{itemize}

The paper is structured as follows. Section~\ref{sec:prelim} is
devoted to preliminaries; we recall basic definitions and properties.
Section~\ref{Sec:Algorithms} describes the algorithm we use to solve
parametric polynomial systems representing the hyperplane
sections. In Section~\ref{sec3}, we apply our computational methods to (canonical) space curves. In Section \ref{Sec:planequartics}, we determine the real divisor bound for certain plane quartics.

\bigskip
 \noindent \textbf{Acknowledgements.}
We would like to thank Matilde Manzaroli for helpful discussions
concerning the proof of Theorem \ref{Thm:RealHarnack}. We would like
to thank Mohab Safey El Din for important discussions and remarks on
the computations. Daniel Plaumann was partially supported through DFG
grant no.~426054364.

\section{Preliminaries}\label{sec:prelim}
By a \emph{real (algebraic) curve} $X$, we mean an integral, smooth
and projective variety of dimension $1$ defined over $\mathbb{R}$ such
that the set $X\left(\mathbb{R}\right)$ of real points is non-empty
(and therefore Zariski-dense in $X$), unless any of these assumptions
is explicitly dropped. Note that a smooth curve is a curve without any
singularities, real or complex. In particular, the set $X(\mathbb{R})$
is an analytic manifold and decomposes into a finite number of
connected components, which are called the \emph{(real) branches} of
$X$. Each branch is diffeomorphic to a circle $S^1$. By Harnack's
Inequality \cite{harnack}, we have $s \leq g+1$, where $s$ is the
number of branches and $g$ is the genus of $X$.

If $X$ is embedded into the projective space $\mathbb{P}^n$, a branch
of $X$ is an \emph{oval} if its homology class in
$H_{1}(\mathbb{P}^{n}(\mathbb{R}), \mathbb{Z}/2)$ is trivial, and a
\emph{pseudo-line} otherwise. Equivalently, ovals are those branches
of $X$ that meet every real hyperplane in $\mathbb{P}^n$ in an even
number of real points (counted with multiplicities), while pseudo-lines meet hyperplanes in an odd
number of points. In particular, a pseudo-line has non-empty
intersection with any hyperplane.

We fix some notation and terminology concerning divisors on curves. As a general reference (covering also curves defined over non-algebraically closed fields), we suggest \cite[Ch.~7]{liu}. A \emph{divisor} on $X$ is a formal $\mathbb{Z}$-linear combination of points
$$D= \sum_{i=1}^{m}n_iP_i \qquad \left(m \in \mathbb{N}_0, n_i \in \mathbb{Z}, P_i\in X\right). $$
Assuming that the points $P_1,\dots,P_m$ are distinct and $n_i\neq 0$ for all $i$, the set $\{P_1,\dots,P_m\}$  is called the \emph{support} of the divisor, the numbers $n_1,\dots,n_m$ the \emph{multiplicities} and $\sum_{i=1}^m n_i$ the \emph{degree}. If all multiplicities in $D$ are nonnegative, the divisor $D$ is called \emph{effective}. If all multiplicites are equal to $1$, the divisor is called \emph{simple}. The support of a divisor on a real curve may consist of real or complex points. However, we will only consider divisors that are \emph{defined over $\R$} and hence conjugation-invariant, i.e., for any point in the support, its complex-conjugate appears with equal multiplicity. In particular, the non-real part of a divisor is of even degree.

For any non-zero real rational function $f\in\R(X)$ on $X$, the divisor of zeros and poles (counted with positive or negative mutiplicities, respectively) is denoted ${\rm div}(f)$. Two divisors $D$ and $E$ are called \emph{linearly equivalent} if $E=D+{\rm div}(f)$ for some $f\in\R(X)^\ast$. The principal divisors ${\rm div}(f)$ have degree $0$, hence linear equivalence preserves the degree. The \emph{complete linear series associated to $D$} is the set of effective divisors on $X$ which are linearly equivalent to $D$ and is denoted $|D|$. A complete linear series carries the structure of a projective space. Any projective subspace of a complete linear series is called a \emph{linear series}. If a point is contained in the support of all divisors in a given linear series, it is a called a \emph{base point}, and the union of all such points is the \emph{base locus}. A linear series is called \emph{base-point-free} if its base locus is empty.

For a real curve $X$ embedded into projective space $\mathbb{P}^n$ with degree $d$, any hypersurface $Z\subset\mathbb{P}^n$ of degree $e$ not containing $X$ defines an effective intersection divisor $X\cdot Z$ of degree $de$. The set of all intersections with hypersurfaces of a fixed degree forms a linear series on $X$, which may or may not be complete. Clearly, such a linear series is always base-point-free. 

\medskip
An effective divisor $D$ is called \emph{totally real} if its support consists of real points only. For the sake of brevity, we call any linear series \emph{totally real} if it contains a totally real (effective) divisor.
After discussing the algorithms in Section \ref{Sec:Algorithms}, we
will examine the following problems.

\begin{prob}\label{probdivisor}
Given a real curve $X$, determine the smallest natural number $N(X)
\in \mathbb{N}^*$ such that any divisor of degree at least $N(X)$ is
linearly equivalent to a totally real divisor.
\end{prob}

We call $N(X)$ the \emph{real divisor bound} of $X$. It was shown by
Krasnov \cite[Thm.~2.2]{krasnov} and Scheiderer
\cite[Cor.~2.10]{scheiderer} that the real divisor bound
is always finite. Furthermore, upper and lower bounds for $N(X)$ were found by
Huisman \cite{huisman1} and Monnier \cite{monnier}, which depend on the
genus $g$ of $X$ only. For example, if $X$ is an $M$-curve or an
$(M-1)$-curve, then we have $N(X) \leq 2g-1$. However, it seems difficult to find upper bounds for curves with few branches. 

An easy
way to determine lower bounds for $N(X)$ is to find a linear series
with a pair of complex-conjugate base points, i.e., a non-real point that is fixed throughout the linear series. With that idea, Monnier \cite[Cor.~6.2]{monnier} proved the inequality $N(X) \geq g+1$ for a curve $X$ with
any number of branches. It seems that no such lower bound is known when considering only base-point-free linear series.
At the end of Section
\ref{Sec:planequartics}, we will construct an example of such a linear series on a
plane quartic curve.

\begin{prob}\label{probhyperplane}
Given a real curve $X$ embedded in projective space, decide whether
the linear series of hyperplanes contains a totally real hyperplane
section.
\end{prob}

Note that according to Bertini's Theorem, the generic element of
a linear series on $X$ is simple away from the base locus
(see \cite[Ch.~1, p.~137]{principles}). However, it may happen that a linear
series contains a totally real divisor, but no simple
such divisor. For example, the linear series of lines on the
plane quartic $X= \mathcal{V}_+(x^4+y^4-z^4) \subset \mathbb{P}^{2}
$ contains the totally real line section
$$X \cdot \mathcal{V}_{+}(x-z) = 4\cdot [1:0:1],$$
but it is easy to see that there is no simple totally real line section.

\begin{prob}\label{probsmoothdivisor}
Given a real curve $X$, determine the smallest natural number $N'(X)
\in \mathbb{N}^*$ such that any divisor of degree at least $N'(X)$ is
linearly equivalent to a simple totally real divisor.
\end{prob}

We call $N'(X)$ the \emph{simple real divisor bound} of $X$. It was
first introduced in \cite[p.~29]{bardet}. Obviously, we have $N(X)
\leq N'(X)$ and a first non-trivial result comparing $N(X)$ and
$N'(X)$ is obtained in \cite[Prop.~2.1.2]{bardet}, namely $N'(X) \leq
2N(X)$. However, it appears to be unknown if $N(X)$ and $N'(X)$ can ever actually be different.

One reason for the importance of the simple real divisor bound comes from
the possibility of transfering results from smooth to singular curves (see
\cite[Thm.~4.3]{singularmonnier}). Basically, our algorithm computes
simple totally real hyperplane sections. When we are mainly interested
in the non-existence of totally real divisors within a linear series,
i.e., in lower bounds for $N(X)$, we modify the algorithm in a way
explained in \Cref{Sec:Algorithms} to handle totally real hyperplane
sections in general.

\section{Algorithm for solving parametric systems}
\label{Sec:Algorithms}

We consider as input $\bm{f} = (f_1, \ldots, f_s)$ in
$\Q[\bm{y}][\bm{x}]$ with $\bm{y} = (y_1, \ldots, y_t)$ and $\bm{x} =
(x_1, \ldots, x_n)$. We assume that there exists a non-empty
Zariski-open subset $\Omega$ of $\C^t$ such that the number of complex
solutions to $\bm{f}(\eta, \bm{x})$ is finite for every $\eta \in
\Omega$ and that $\bm{f}$ generates a radical ideal in
$\Q(\bm{y})[\bm{x}]$. Below, we describe the main ingredients which
allow us to classify the real roots of the system $\bm{f}$, i.e., to
compute semi-algebraic formulas defining a partition
$S_1\cup\cdots\cup S_r$ of a dense semi-algebraic subset of $\R^t$
such that for a given $1\leq i \leq r$ and all $\eta\in S_i$, the
number of real roots to $\bm{f}(\eta, \bm{x})$ is invariant.

To do that, we rely on well-known properties of {\em Hermite's quadratic
form} to count the real roots of zero-dimensional ideals; see
\cite{Hermite56}.  Basically, given a zero-dimensional ideal
$I\subset \Q[\bm{x}]$, Hermite's quadratic form is defined on the finite
dimensional $\Q$-vector space $A \coloneqq \Q[\bm{x}]/I$ by
\begin{align*}
  A \times A & \to \Q \\ 
  (h,k)  & \mapsto {\rm trace}(\mathscr{L}_{h\cdot k}),
\end{align*}
where $\mathscr{L}_{h\cdot k}$ denotes the endomorphism $p\mapsto
h\cdot k\cdot p$ of $A$.

The number of distinct real (resp.~complex) roots of the algebraic set
defined by $I$ equals the signature (resp.~rank) of Hermite's
quadratic form; see e.g.~\cite[Thm.~4.101]{BPR}. Given a basis of
$\Q[\bm{x}]/I$, such a quadratic form is represented by a symmetric
matrix of size $\delta \times \delta$, where $\delta$ is the degree of
$I$. Hence, the signature of Hermite's quadratic form can be computed
once a matrix representation, which we call Hermite's matrix, of this
quadratic form is known \cite[Algo.~8.18]{BPR}.

In~\cite{LeSa20}, we slightly extend the definition of Hermite's
quadratic form and Hermite's matrix to the context of parametric
systems; we call them parametric Hermite quadratic form and parametric
Hermite matrix. This is easily done since the ideal of
$\Q(\bm{y})[\bm{x}]$ generated by $\bm{f}$, considering $\Q(\bm{y})$
as the base field, has dimension zero. 

We also establish a specialization property for this parametric
Hermite matrix: we identify a polynomial $\bm{w}_{\infty} \in
\Q(\bm{y})$ such that, when specializing the parameters $\bm{y}$ in
the Hermite matrix to a point $\eta\in \R^t$ where
$\bm{w}_{\infty}(\eta) \ne 0$, we obtain a Hermite matrix representing
Hermite's quadratic form in $\Q[\bm{x}]/ \langle \bm{f}(\eta, \bm{x})
\rangle$.

Hence, such a parametric Hermite matrix allows us to count
respectively the number of distinct real and complex roots at any
parameters outside a strict algebraic sets of $\R^t$ by evaluating the
signature and rank of its specialization.

Based on the aforementioned specialization property, we design an
algorithm for solving parametric systems as follows.

\begin{itemize}
\item[(a)] We start by computing a parametric Hermite matrix
$\mathscr{H}$ associated to $\bm{f} \subset \Q[\bm{y}][\bm{x}]$. Note
that this requires computations over the quotient algebra
$\Q(\bm{y})[\bm{x}] / \langle \bm{f} \rangle$ through the theory of
Gr\"obner bases.

From the matrix $\mathscr{H}$, we derive two polynomials:
$\bm{w}_{\infty}$ encoding the non-specialization locus of
$\mathscr{H}$ and $\bm{w}_{\mathscr{H}}$ which is basically the
numerator of $\det(\mathscr{H})$. The product $\bm{w}_{\infty} \cdot
\bm{w}_{\cH}$
is denoted by $\bm{w}$.

\item[(b)] Next, we compute a set of sample points $\{\bm{a}_1,
\ldots, \bm{a}_{\ell}\}$ in the connected components of the
semi-algebraic set of $\R^t$ defined by $\bm{w}_{\infty} \neq 0$ and
$\bm{w}_{\cH} \neq 0$ where $\bm{w}_{\cH}$ is derived from $\mathscr{H}$.

This is done through the so-called critical point method (see e.g.
\cite[Ch.~12]{BPR} and references therein) which are adapted to
obtain practically fast algorithms following \cite{SaSc03}.

By \cite[Prop.~21]{LeSa20}, for any $\eta$ varying over the connected
component containing a sample point $\bm{a}_i$, the number of real
solutions to $\bm{f}(\eta, \bm{x})$ is the same as the number of real
solutions to $\bm{f}(\bm{a}_i, \bm{x})$.

\item[(c)] For $1\leq i \leq \ell$, evaluate the signature of the
specialized Hermite matrix $\mathscr{H}(\bm{a}_i)$, which gives the
number $r_i$ of real solutions to $\bm{f}(\bm{a}_i, \bm{x})$.
\end{itemize}

In most of the cases, the algorithm above is sufficient to compute a
hyperplane that intersects the given curve at only real points if such
a hyperplane exists. From a computational point of view, Step
\emph{(b)} is usually the most expensive: the polynomial $\bm{w}$ it
takes as input may have large degree since it may be exponential in
the number of variables $n$ (but polynomial in the maximum degree of
the input polynomials).

Note also that the resulting classification holds only for the subset
of the space of parameters where $\bm{w} \ne 0$. The vanishing locus
of $\bm{w}$ contains points above which either the matrix $\mathscr{H}$
does not specialize well ($\bm{w}_{\infty} = 0$) or $\bm{f}$ has
multiple roots ($\bm{w}_{\cH} = 0$).

Theoretically, a complete root classification, i.e., the number
of real solutions of $\bm{f}$ for every $\eta \in \R^t$ can be
obtained using a similar routine. This consists of classifying the
solutions of $\bm{f}$ over the vanishing locus of $\bm{w}$. There are
several possible approaches, for instances, computing over
the algebraic extension $\mathbb{Q}[\bm{y}]/\langle \bm{w} \rangle$ or
calling the algorithm above on with $\bm{w}$ added to the input
system. The first approach usually leads to high arithmetic costs
while the second induces Hermite matrices of large size (depending on
the degree of $\bm{w}$). One can also try to compute the sign
conditions of the leading principal minors of $\cH$ while imposing a
rank deficiency on the matrix. This results in deciding the emptiness
of a semi-algebraic set whose defining atoms are minors of the Hermite
matrix. To the best of our knowledge, these methods can be
computationally difficult in practice.

However, in the examples we consider in this paper, the polynomials
$\bm{w}_{\infty}$ correspond to the hyperplanes which intersect the
given curves at infinity and are factorized into polynomials of small
degree (at most $3$). Thus, they can be treated by calling the
algorithm on the input $\bm{f}$ adding each factor of
$\bm{w}_{\infty}$. Looking closer, these factors can be simplified
before being sent to the above algorithm to accelerate the
computation. For examples, linear factors can be handled through
substitutions of variables or the quadratic factors which are sums of
squares can be replaced by linear equations. Further, these processes
will be explained in detail for each example.

On the contrary, handling the solutions of $\bm{w}_{\cH}$, where the
system $\bm{f}$ has multiple roots, requires an expensive
computation. Therefore, our algorithm is limited at the moment to
computing simple totally real hyperplane sections, i.e., the
intersection has only simple points.

In the particular case of one-parameter (see the examples in
Section~\ref{Sec:planequartics}), we can obtain easily the complete
root classification by evaluating the signs of leading principal
minors of the matrix $\mathscr{H}$ at real solutions of $w$ using
exact algorithms for real root isolation \cite{XiaYang02,KoRoSa16}.

We illustrate the algorithm above by the following example.

\begin{ex}
We consider the parametric system 
\[\bm{f} = \{x_1^2+x_2^2-y, x_1^2+x_1x_2-yx_2+x_1+y^2\},\]
where $(x_1,x_2)$ are variables and $y$ is the parameter. Following
\cite[Algo. 2]{LeSa20}, we obtain the basis $\{1,x_2,x_1,x_2^2\}$ for
the quotient ring $\mathbb{Q}[y][x_1,x_2] / \langle \bm{f} \rangle$
and the symmetric Hermite matrix associated to this basis
\[ \cH = \begin{bmatrix}
4 & -y-1 & y-1 & 2y^2+5y \\
* & 2y^2+5y & -3y^2-y+1 & y^3/2-6y^2-3y+1/2 \\
* & *  & -2y^2-y & 7y^3/2+4y^2-y-1/2 \\
* & *  & * & -5y^4/2+5y^3+23y^2/2+y-1/2 \\
\end{bmatrix}.\]
The non-specialization polynomial $\bm{w}_{\infty}$ in this example is
identically $1$. The determinant of this Hermite matrix is 
\[\bm{w} = \bm{w}_{\cH} =  41y^8+43y^7-59y^6-204y^5-60y^4+20y^3+4y^2-y.\]
This polynomial has two real solutions: $0$ and $\tilde{y} \approx
1.714$. So, the semi-algebraic set defined by $\bm{w} \ne 0$ has three
connected components and the number
of distinct real solutions of $\bm{f}$ is invariant over each of those
connected components. More precisely,
\begin{align*}
y < 0 : \qquad & \bm{f} \text{ has }0\text{ real solution,}\\
0 < y < \tilde{y} : \qquad & \bm{f} \text{ has }2\text{ real
solutions,}\\
\tilde{y} < y : \qquad & \bm{f} \text{ has }0\text{ real solution.}
\end{align*}
Now we study the roots of $\bm{f}$ over two real roots of $w$.

We specialize $y$ to $0$ in the leading principal minors of
$\mathscr{H}$ and obtain the sign sequence $(1,-1,-1,0)$. Thus, the
system has three distinct complex solutions but only one real solution
when $y = 0$.

For $y = \tilde{y}$, we obtain the sign sequence $(1,-1,1,0)$ for the
leading principal minors specialized at $y = \tilde{y}$. Therefore,
the system has three distinct complex solutions but no real solution.\hfill$\triangle$
\end{ex}

Further, we will use this algorithm for solving parametric polynomial
systems arising in the computation of totally real hyperplane
sections.

\section{Totally real hyperplane sections}\label{sec3}
The possibilities of our computational approach can by shown by the
following examples. We point out that $X$ is always assumed to be a
real curve and $g$ stands for the genus of $X$. If $X$ is a real
rational or real elliptic curve, it is not hard to see that $N(X)=1$. Hence, we assume $g\geq 2$.

We first consider canonical curves: If $X \subset \mathbb{P}^{g-1}$ is
a canonical curve having $s \geq g-1$ branches, then the canonical
linear series, which is equal to the hyperplane linear series, is
totally real. Since there are no canonical curves of genus $g \le 2$,
the minimal examples are plane quartic curves. In this case, the
question of whether a plane quartic curve consisting of only one oval 
possesses a totally real line section is related to the undulation
invariant (see \cite[Thm.~4.2]{undulation}). We therefore look at
canonical curves in $\mathbb{P}^3$.

\begin{ex}\label{ex1p3}
In this example, we consider a finite sequence of canonical curves
$X_k$ in $\mathbb{P}^3$; these curves arise as complete intersections
of a cubic and a quadric. Their genus is $4$ and their degree is
$6$. In non-homogeneous coordinates, we fix the real cubic polynomial
$f= (x+3)(x-y-3)(x+y-3)-2$.\\

$1.$ We set $g_5=x^2+y^2+z^2-100$, $g_4=(x+3)^2+(y+2)^2+z^2-60$ and
$g_3=x^2+y^2+z^2-50$. Let $X_k$ be the projective curve defined by the
affine ideal $I_k=\langle f,g_k \rangle$ for $k=3,4,5$. The curve
$X_k$ has $k$ ovals.
\begin{figure}[ht]
    \centering
    \includegraphics[width=4.3cm]{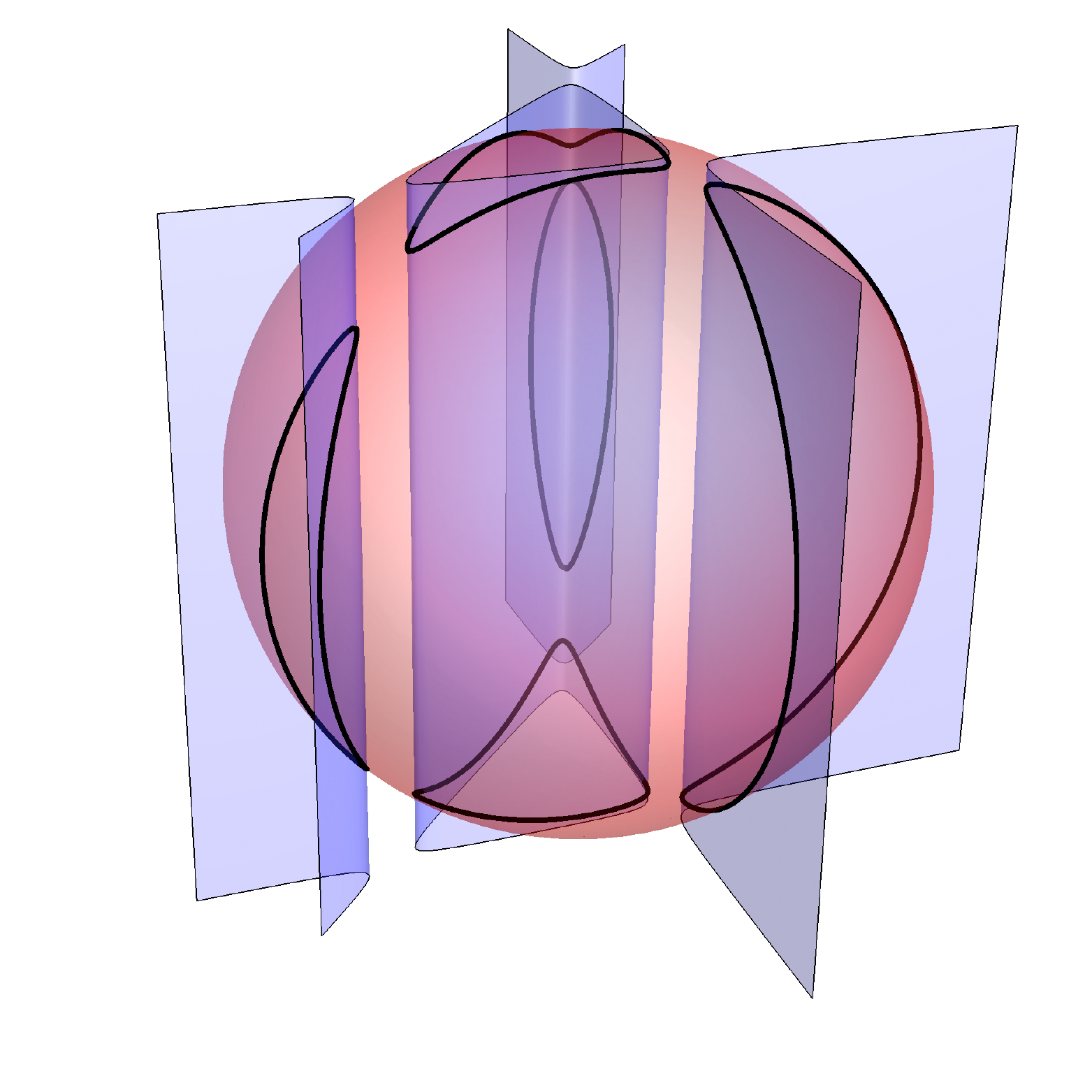}
    \qquad
    \includegraphics[width=4.3cm]{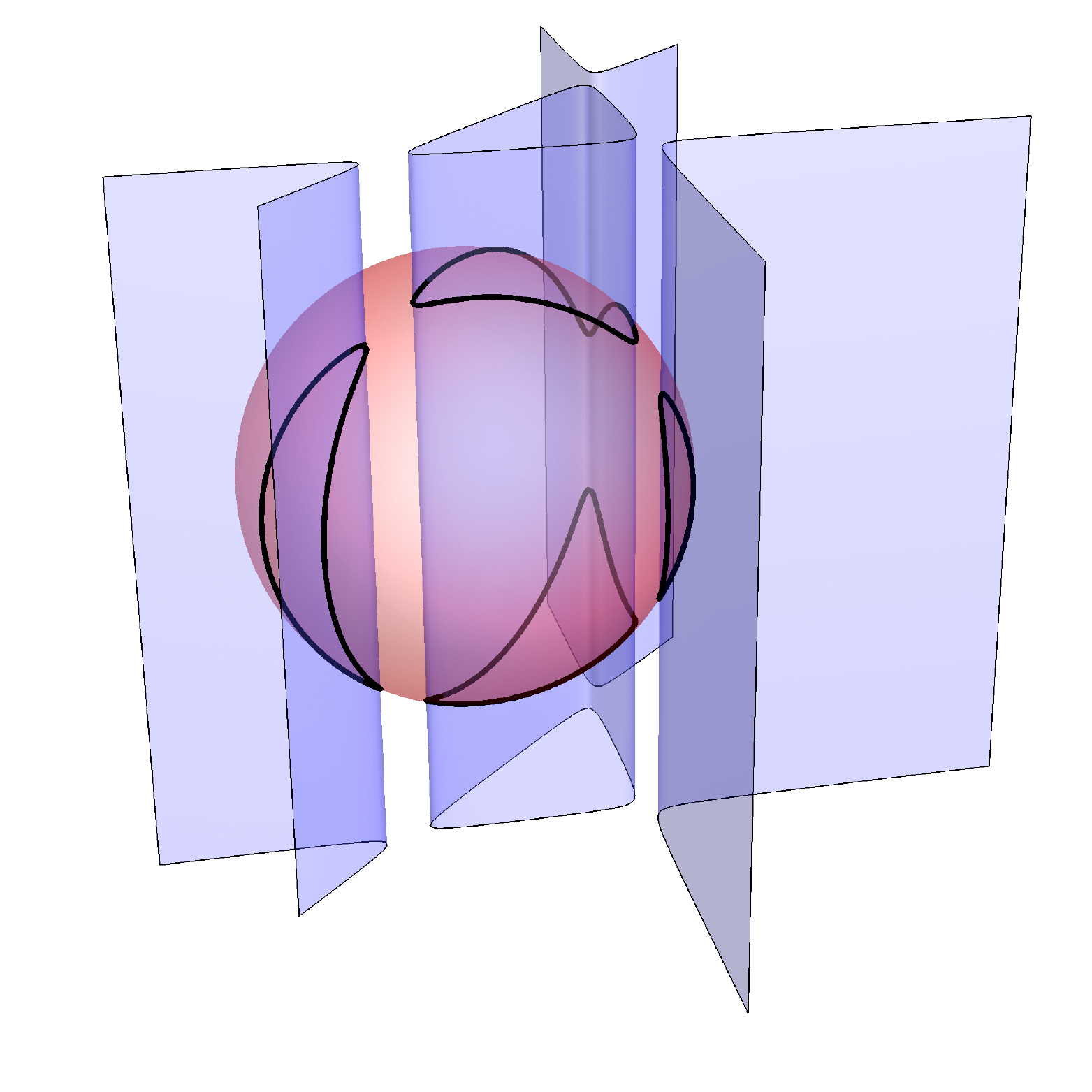}
    \qquad
    \includegraphics[width=4.3cm]{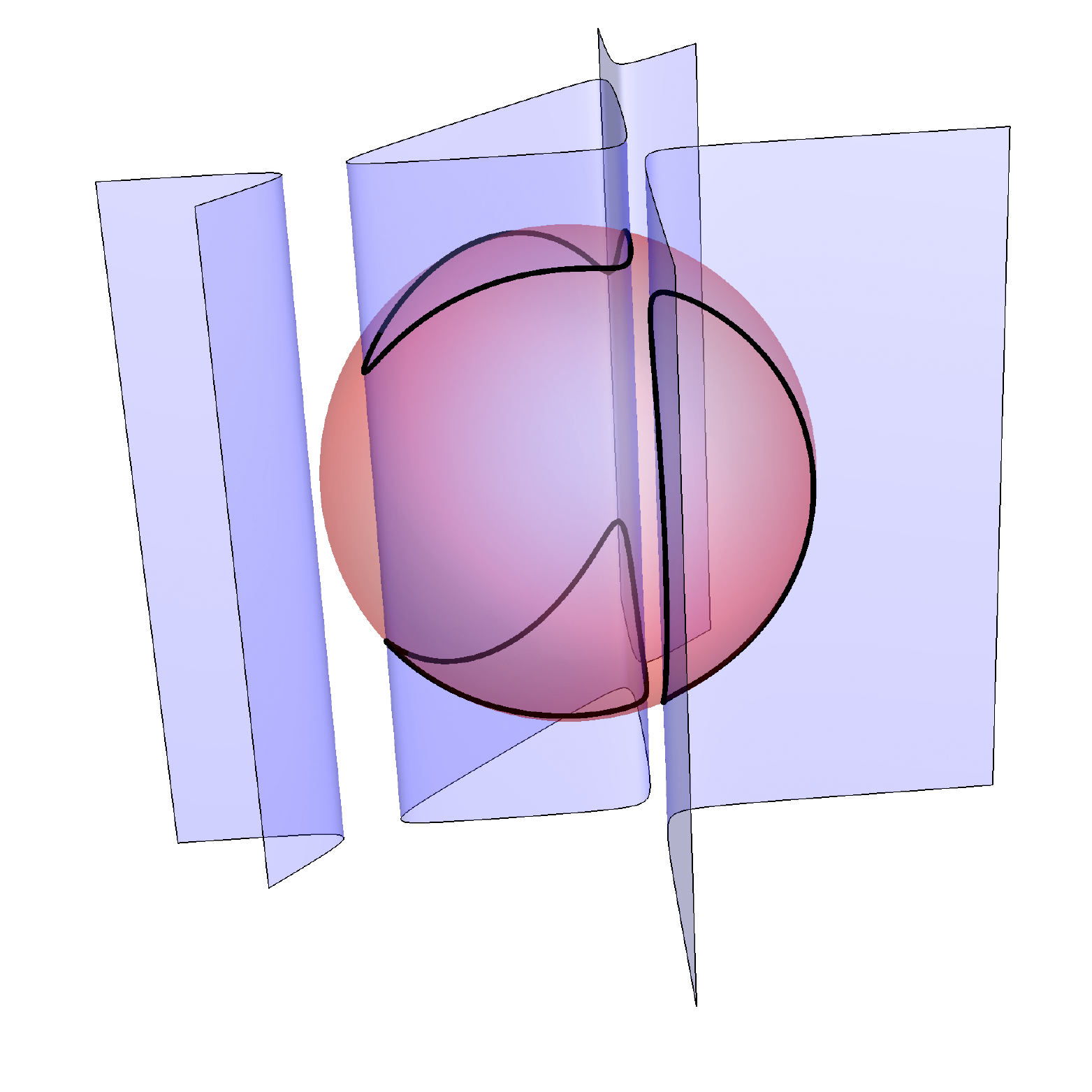}
    \caption{The curves $X_5$, $X_4$, and $X_3$.}
\end{figure}
\noindent Running the algorithm on $I_k$ for a couple of minutes, we
find affine hyperplanes which intersect the curve $X_k$ in real points
only, such as the following three hyperplanes:
\begin{small}
\begin{align*}
  H_5 &= x+ 15307 y-8072  z+6472, \\
  H_4 &= x-14842 y-25786  z-61192, \\
  H_3 &= x+55704  y-26379 z-19751.
\end{align*}
\end{small}\ignorespacesafterend
Each hyperplane $H_k$ intersects $X_k$ in $6$ (distinct) real points.
\begin{figure}[ht]
    \centering
    \includegraphics[width=4.3cm]{pointsX5H5.pdf}
    \qquad
    \includegraphics[width=4.3cm]{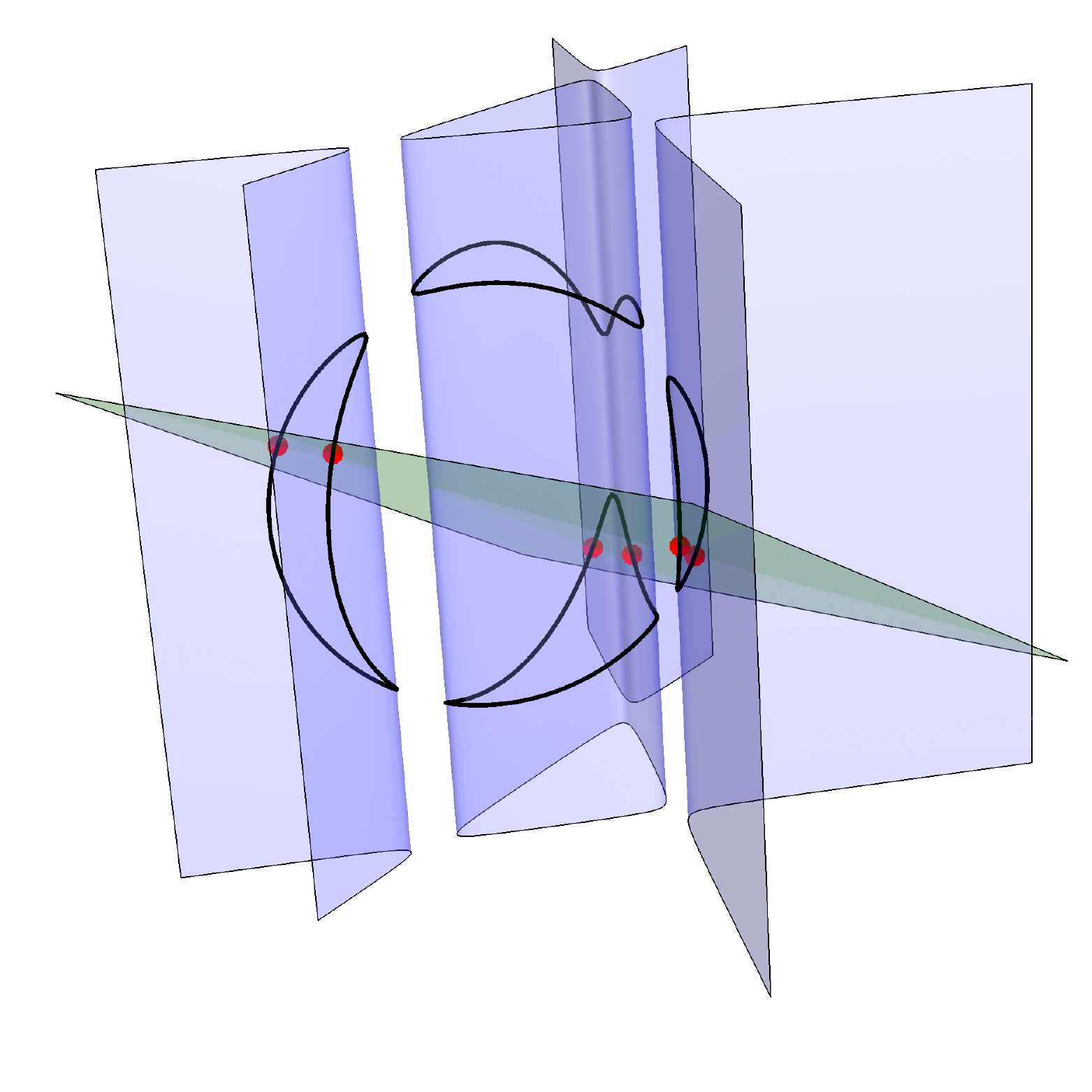}
    \qquad
    \includegraphics[width=4.3cm]{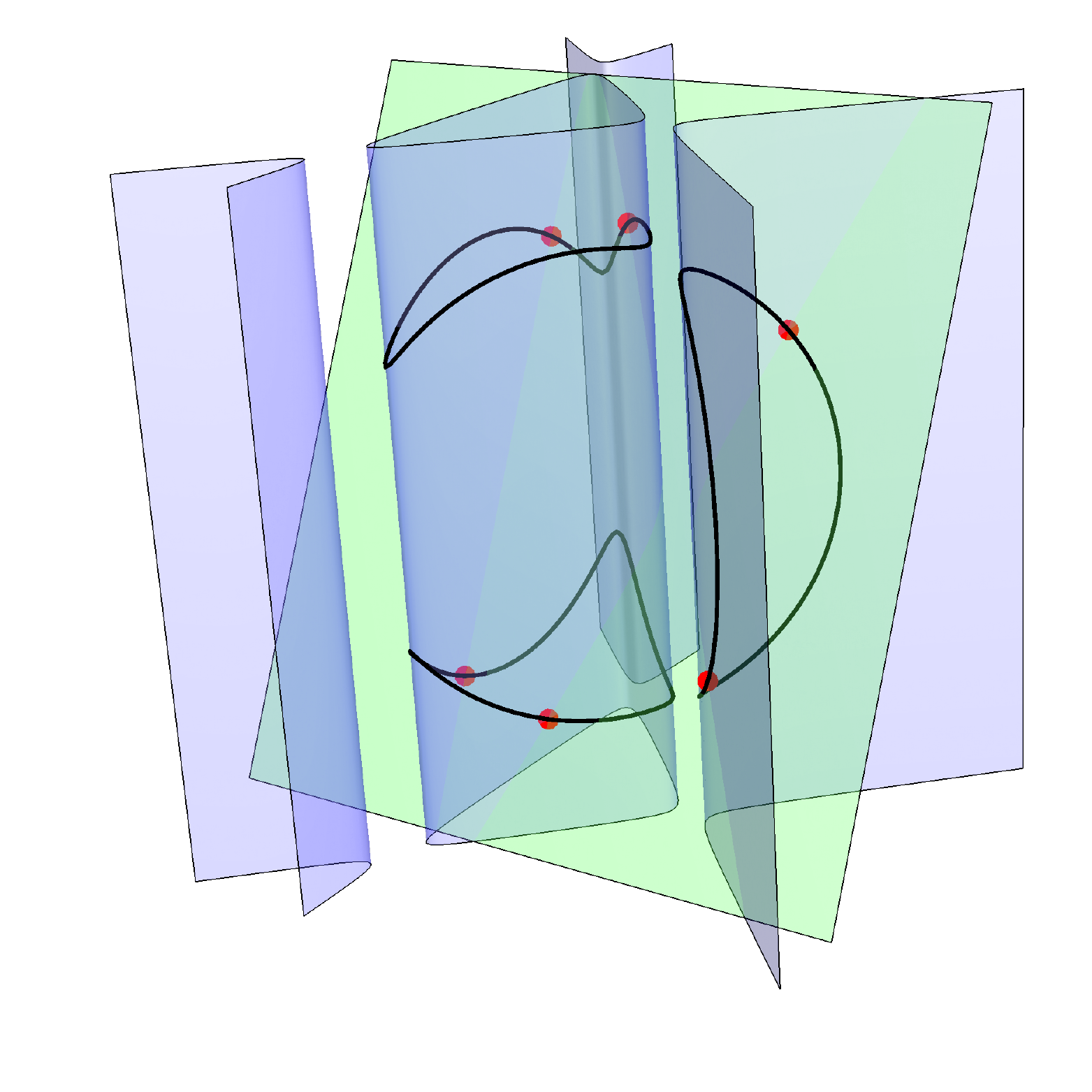}
    \caption{Intersecting curves and planes: $X_i\cap H_i$ for $i=5,4,3$.}
\end{figure}

2. Setting $g_2= x^2+y^2+z^2-10$, let $X_2$ be the projective curve
defined by the affine ideal $I_2 = \langle f,g_2\rangle$. This curve
has $2$ ovals. From the theoretical point of view and in contrast to
the first examples, it is \textit{a priori} not clear whether this curve
possesses a totally real hyperplane section.
Running the algorithm for about $40$ minutes on $I_2$, the
result is that this curve does possess a totally real hyperplane
section. More precisely, the hyperplane
\[H_2 = x+43y/2000+131z/25+9,\]
intersects $X_2$ in $6$ (distinct) real points.
\begin{figure}[ht]
  \centering
  \includegraphics[width=4.3cm]{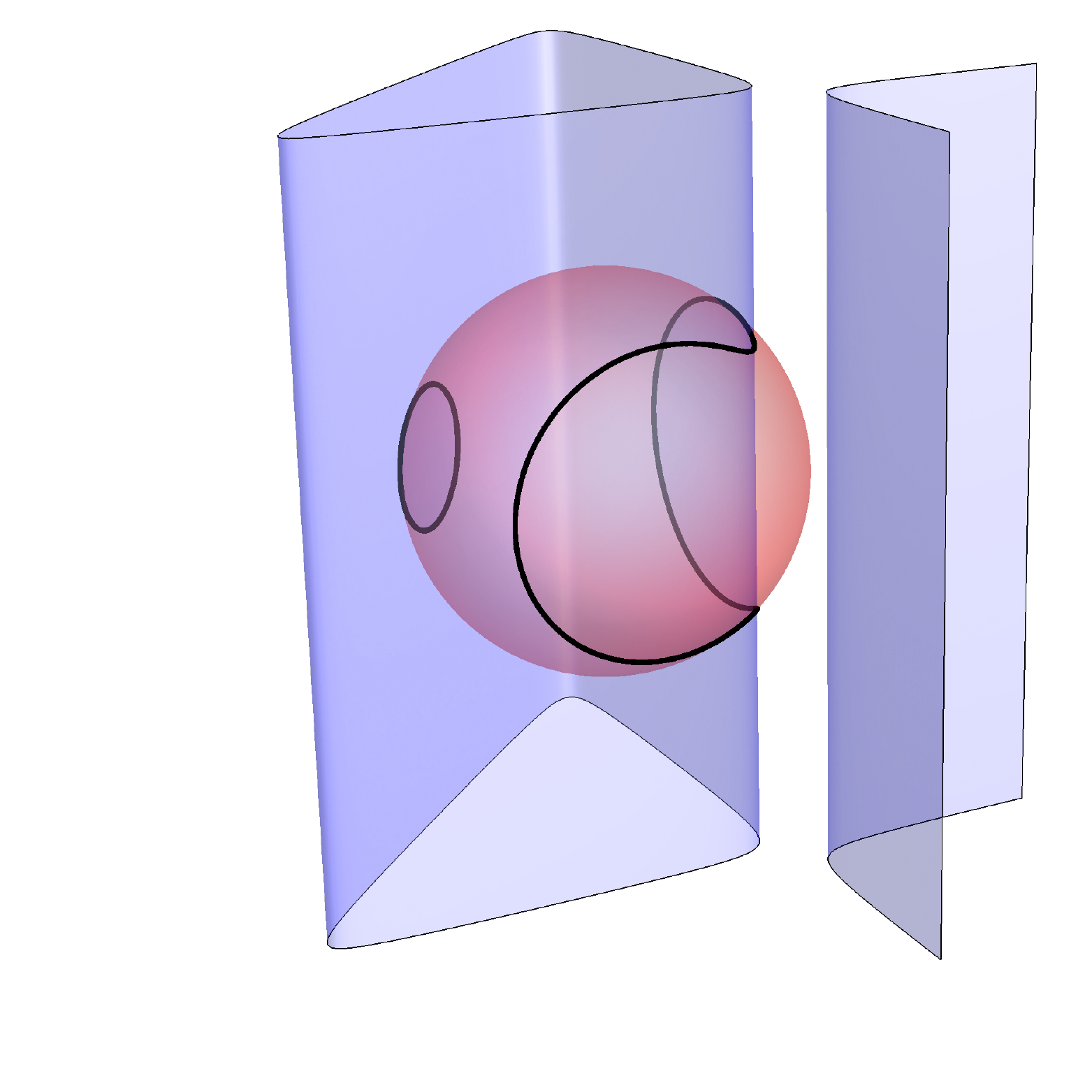}
  \qquad
  \includegraphics[width=4.3cm]{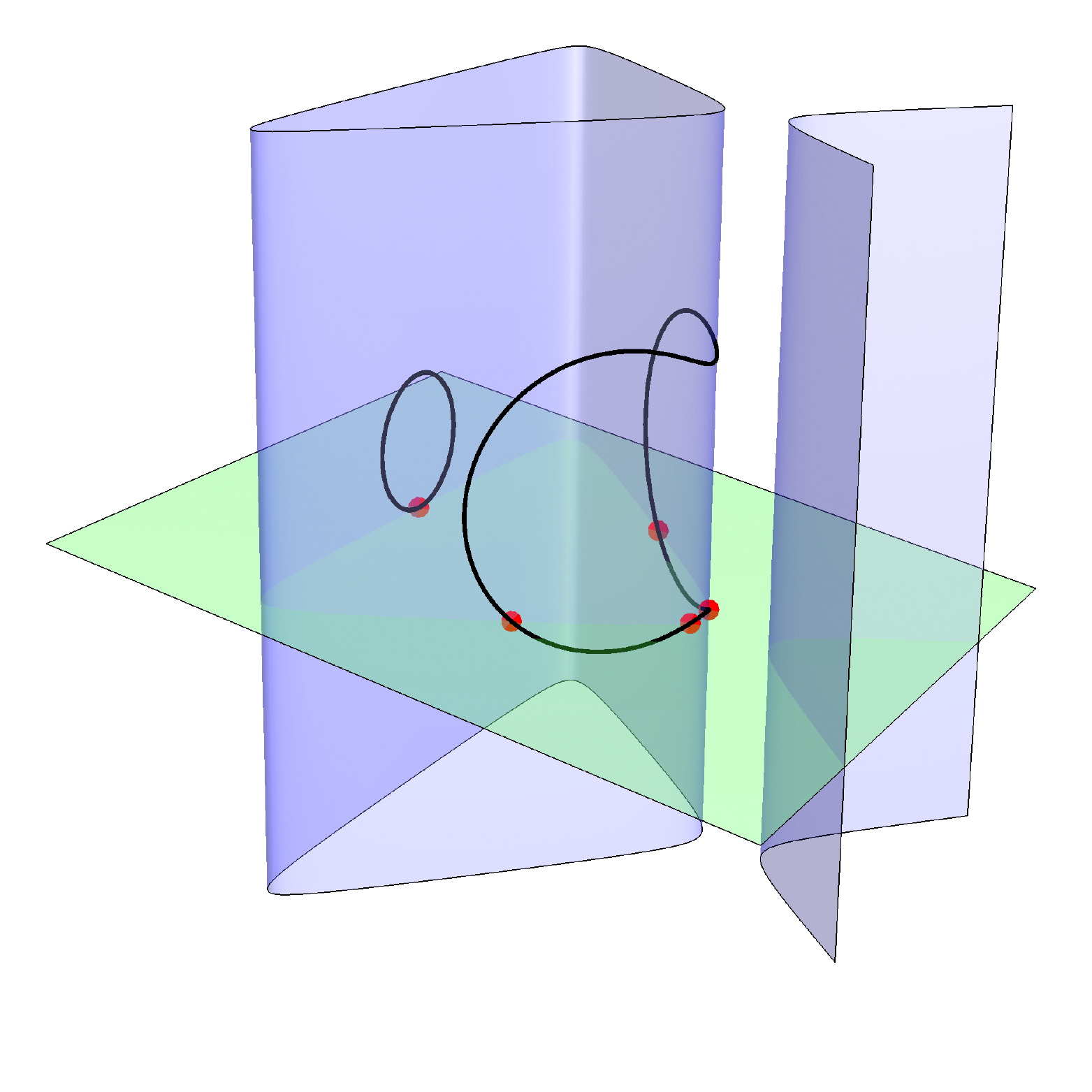}
  \caption{The curve $X_2$ and its intersection with the plane $H_2$.}
\end{figure}

3. Setting $g'_2= (x + 1)^2 + (y + 1)^2 + z^2 - 10$, let $X'_2$ be the
projective curve defined by the affine ideal $I'_2 = \langle
f,g'_2\rangle$. This curve has $2$ ovals, too.
\begin{figure}[ht]
    \centering
    \includegraphics[width=4cm]{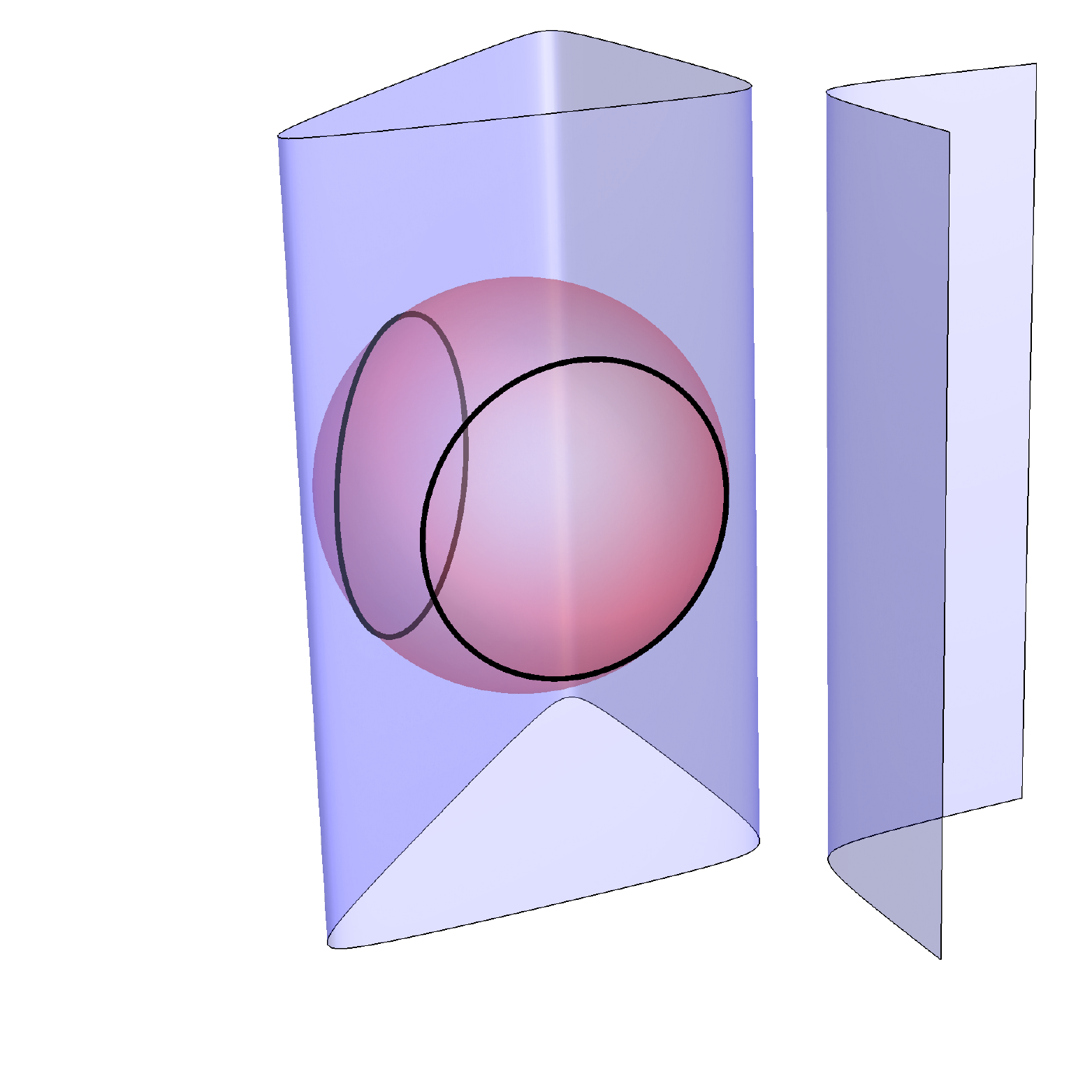}
    \caption{The curve $X'_2$.}
\end{figure}
\noindent We compute a Hermite matrix of size $6\times 6$ in
three parameters, which gives a boundary polynomial $\bm{w}$ of degree
$18$. These computations are done within seconds. The algorithm then
computes points per connected component of the semi-algebraic set
defined by $\bm{w}_{\infty} \cdot \bm{w}_{\cH} \ne 0$. This computation
takes almost $2$ hours. In contrast to the second example, this Hermite
matrix does not attain signature $6$ at any of those points. Besides,
the hyperplanes that correspond to the real solutions of
$\bm{w}_{\infty}$ intersect $X_2'$ at non-real points at
infinity. Thus, these hyperplanes do not give any totally real
hyperplane section. So, $X'_2$ has no simple totally real hyperplane
section. Consequently, we have $N'(X'_2) \geq 7$.

4. For the next example, let us take the Clebsch cubic surface
$f_0=x^3 + y^3 + z^3 + 1 - (x + y + z + 1)^3$ and $g_1=(x + 1)^2 + y^2
+ z^2 - 2$. The projective curve $X_1$ defined by the affine ideal
$I_1=\langle f_0, g_1\rangle$ has only $1$ oval.
The output of the algorithm is the hyperplane
$$H_1=x-4468y-32932z-10164$$ which intersects $X_1$ in $6$ (distinct)
real points.
\begin{figure}[ht]
  \centering
  \includegraphics[width=4cm]{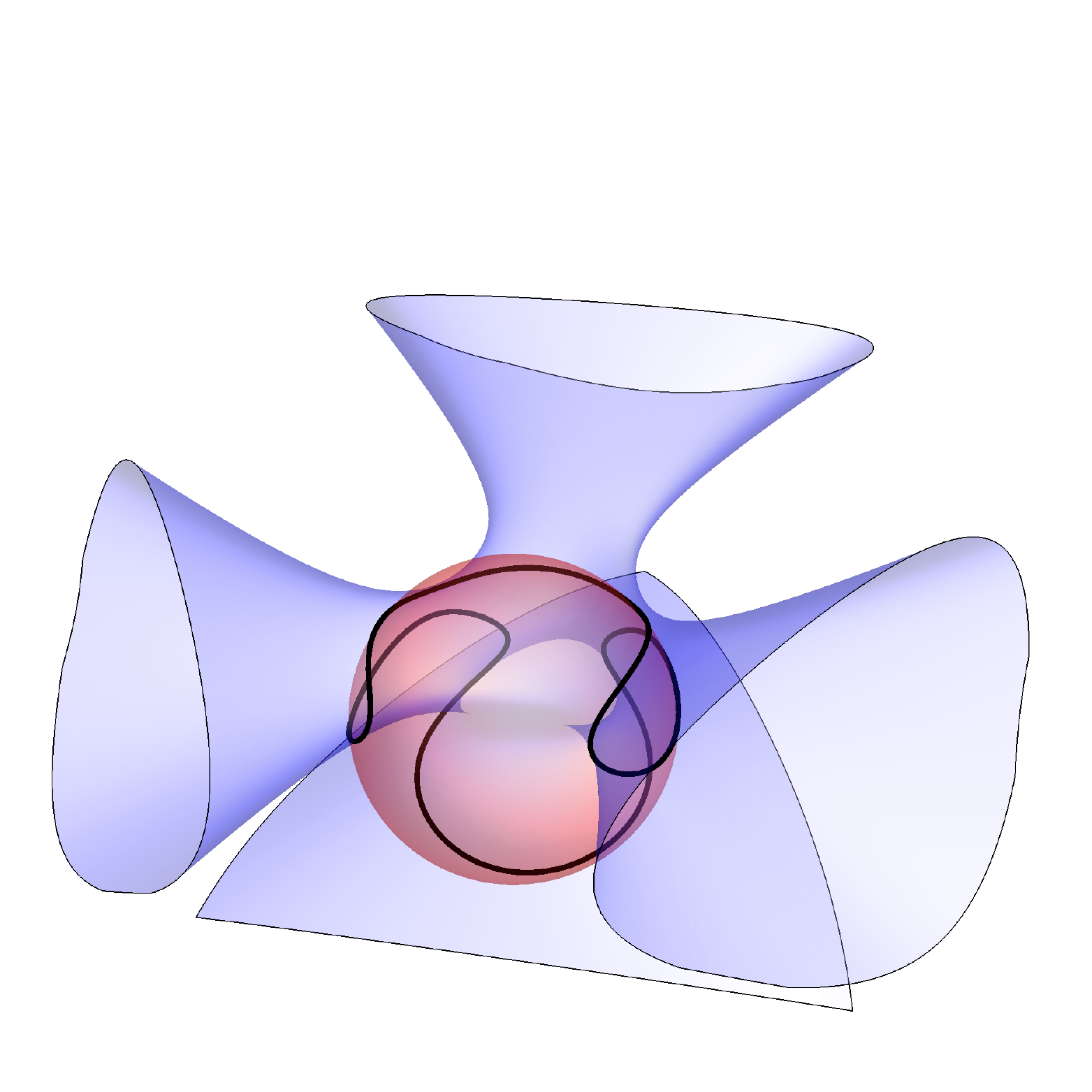}
  \qquad
  \includegraphics[width=4cm]{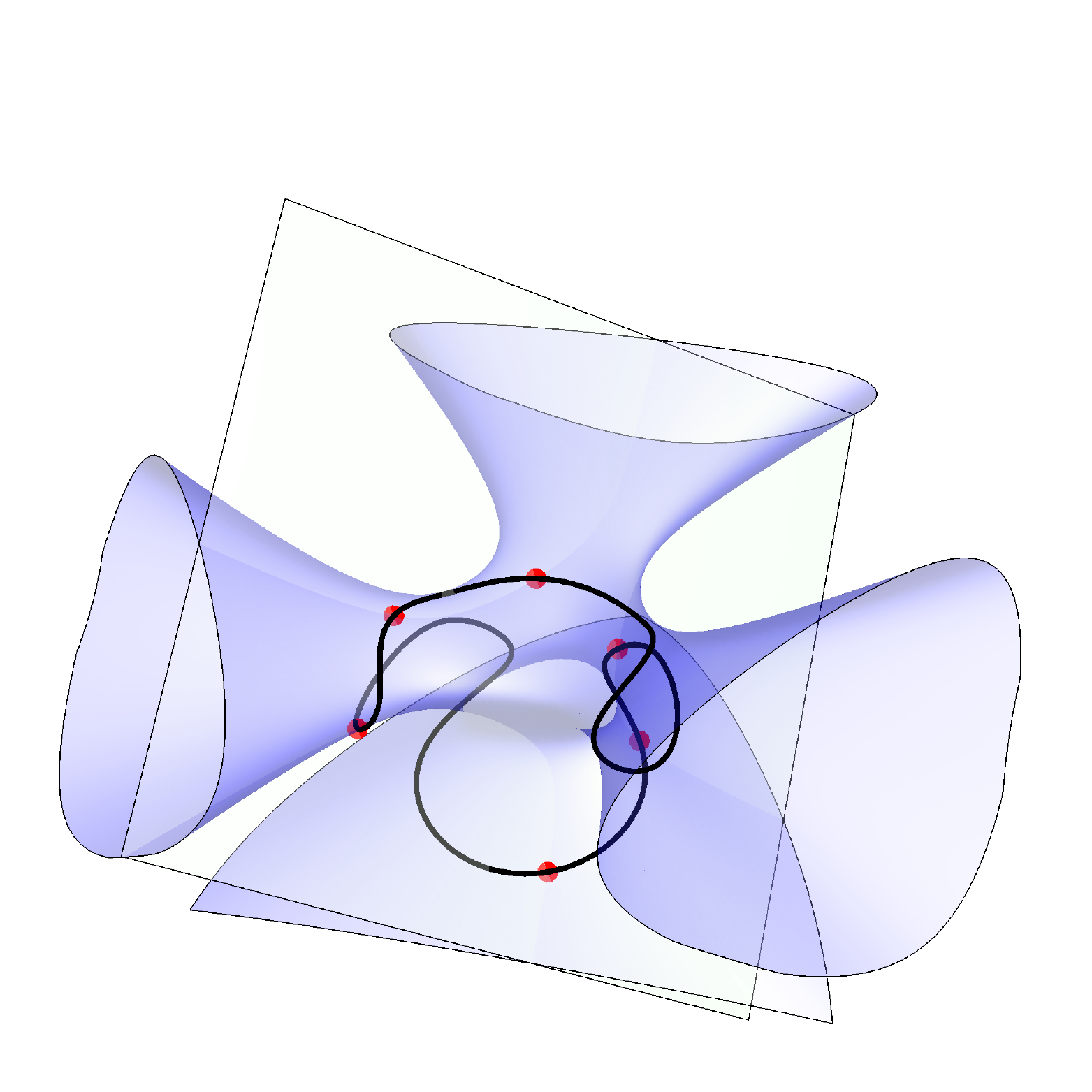}
  \caption{The curve $X_1$ and its intersection with the plane $H_1$.}
\end{figure}

5. Finally, taking $g_1'=(x+2)^2+y^2+z^2-2$, let $X_1$ be the projective curve defined by the affine ideal $I_1'= \langle f,g_1'\rangle$. This curve has only $1$ oval, too.
Again, it is \textit{a priori} not clear whether this curve
has a totally real hyperplane section. 
\begin{figure}[ht]
  \centering
  \includegraphics[width=4.5cm]{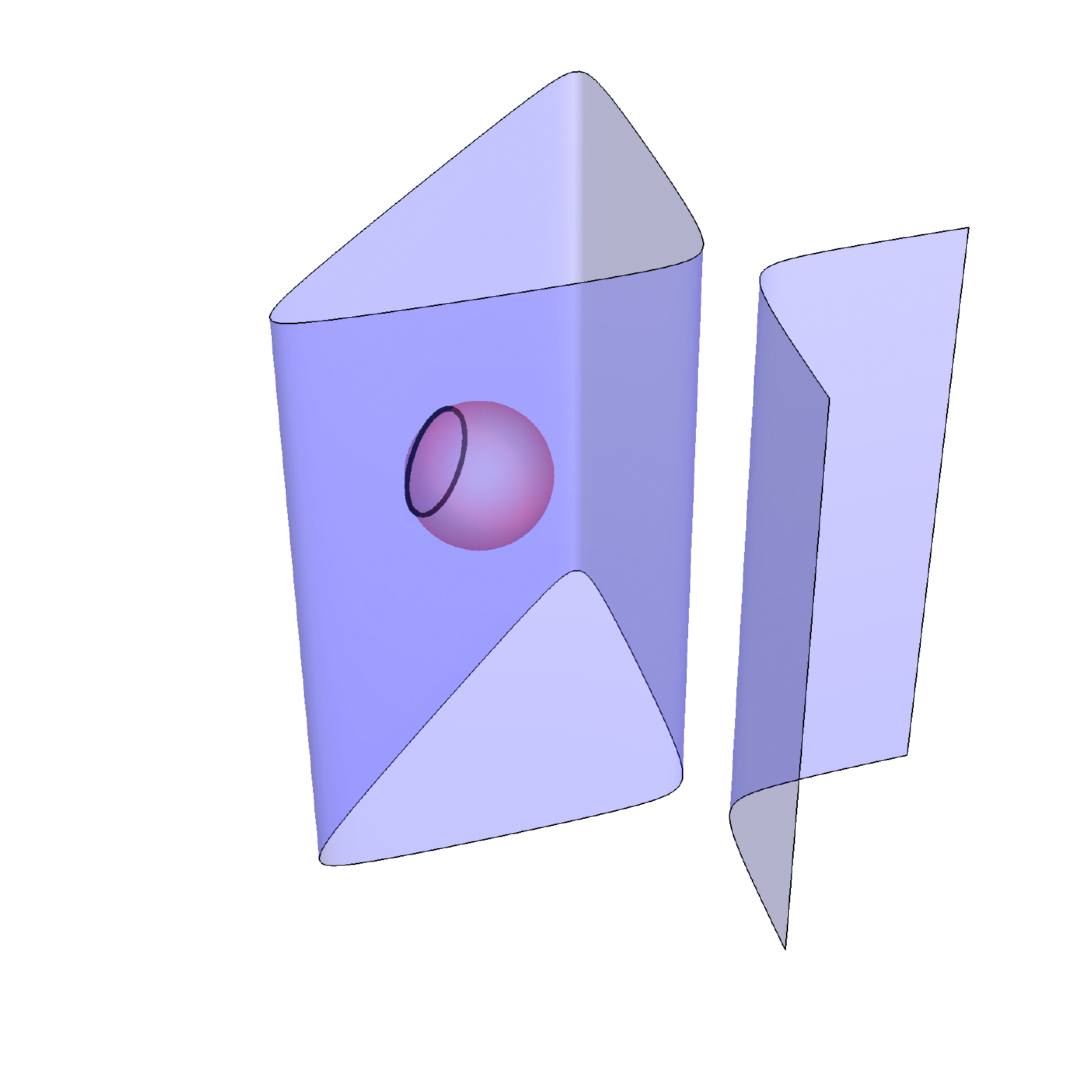}
  \caption{The curve $X_1'$.}
\end{figure}

On this example, our algorithm behaves similarly as in the third
example. We compute a $6\times 6$ Hermite matrix in three parameters
which gives a boundary polynomial $\bm{w}$ of degree $18$. The
computation of sample points of the semi-algebraic set defined by
$\bm{w} \ne 0$ takes $2$ hours and none of the computed sample points
gives the Hermite matrix a signature of $6$. Moreover, the solutions of
$\bm{w}_{\infty}$ here are the same as in the third example and
do not correspond to a totally real hyperplane section. Thus, there is no simple totally real hyperplane section in this
case. Consequently, we have $N'(X_1') \geq 7$. $\hfill\triangle$
\end{ex}

Of course, it takes much effort to show or disprove the existence of a
canonical curve $X$ in $\mathbb{P}^3$ with $1$ or $2$ ovals and $N(X)
\leq 6$. The existence would imply that the real divisor bound $N(X)$
cannot depend on the main topological parameters of a real curve (the
genus, the number of connected components, and whether or not the
curve is of dividing type) only.

As already mentioned, it is a challenging problem to find upper bounds
for $N(X)$ in the case of curves with few branches. However, assuming
the following conjecture by Huisman to be true, Monnier
\cite[Thm.~3.7]{monnier} established new bounds for $(M-2)$-curves
depending on the genus only.

\begin{con}[Conjecture 3.4 in \cite{huisman2}]\label{conjecture}
Let $n \geq 3$ be an odd integer and $X \subset \mathbb{P}^{n}$ be an unramified real curve.
Then $X$ is an $M$-curve and each branch of $X$ is a pseudo-line, i.e., it realizes the non-trivial homology class in $H_{1}(\mathbb{P}^{n}(\mathbb{R}), \mathbb{Z}/2)$.
\end{con}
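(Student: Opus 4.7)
The statement is a conjecture due to Huisman and, to my knowledge, is open in general; what follows is an approach I would try rather than a complete proof. The plan is to exploit orientability of $\mathbb{P}^{n}(\mathbb{R})$ for odd $n$ together with the unramifiedness hypothesis to force both the pseudo-line condition on each branch and the equality $s = g+1$ in Harnack's inequality.

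First I would make the hypothesis concrete: for an embedding $X \hookrightarrow \mathbb{P}^{n}$, being \emph{unramified} should mean that the complete osculating flag of the embedding has no real ramification points on $X(\mathbb{R})$, i.e., the Wronskian of the complete linear series has no real zeros. Under this assumption, fix a branch $C \subset X(\mathbb{R})$. I would try to show that $C$ is a pseudo-line by a real Rolle-type argument: if $C$ were an oval, then a generic real hyperplane meeting $C$ transversally would cut $C$ in an even number of real points, and continuously varying the hyperplane through a real pencil would force a real critical configuration, which in turn produces a real zero of the osculating Wronskian along $C$. Orientability of $\mathbb{P}^{n}(\mathbb{R})$ for odd $n$ is exactly what lets the intermediate-value step yield an actual real ramification point rather than a pair of complex-conjugate ones, contradicting unramifiedness.

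Once every branch is shown to be a pseudo-line, the count $s = g+1$ should follow from a real Plücker/Brill--Segre-type formula for the osculating sequence: summing ramification indices along the flag $0 \subset T^{1}X \subset \cdots \subset T^{n-1}X$ computes a topological invariant combining the genus, the number of branches, and their $\mathbb{Z}/2$-homology classes in $\mathbb{P}^{n}(\mathbb{R})$. The unramified hypothesis forces all real ramification counts in that formula to vanish, so it should collapse to equality in Harnack's bound. The main obstacle, and presumably the reason the conjecture is still open, is exactly this last step: the required real Plücker formula for $n \geq 3$ must control complex-conjugate pairs of ramification points at every stage of the osculating filtration, and Huisman's own partial results (as well as those of Monnier and others) seem to exploit additional structure such as small $n$, many components, or a particular choice of linear series, rather than a general vanishing mechanism valid for all odd $n$.
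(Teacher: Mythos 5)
This statement is a \emph{conjecture} quoted from Huisman's paper; the paper you are being compared against offers no proof of it, and in fact it explicitly records that the conjecture is \emph{false} as stated: a family of counterexamples for $n=3$ was constructed by Kummer and Manevich (cited as \cite{conjectures}), and Example~\ref{exvar} of the paper reconstructs such curves explicitly --- e.g.\ an unramified curve in $\mathbb{P}^3$ of genus $4$ and degree $6$ with a single oval, which is neither an $M$-curve nor has only pseudo-line branches. So no proof strategy can succeed for $n=3$, and your sketch must contain a false step somewhere.

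Concretely, two things go wrong. First, your working definition of ``unramified'' does not match the one used here: the paper (following Huisman) calls a non-degenerate $X\subset\mathbb{P}^n$ unramified if $\mathrm{wt}(H\cdot X)\le n-1$ for every \emph{real} hyperplane $H$, where $\mathrm{wt}$ measures the excess multiplicity of the intersection divisor; this is a condition only on real hyperplane sections, not the vanishing of the full osculating Wronskian. Second, and more importantly, your Rolle-type argument for the pseudo-line condition fails precisely because of this slack: moving a real hyperplane through a pencil until it becomes critical on an oval produces a tangency, but a simple tangency contributes weight $1$ to $\mathrm{wt}(H\cdot X)$, which is permitted whenever $n-1\ge 1$. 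For $n=3$ one can even tolerate weight $2$, and the Kummer--Manevich construction (perturbing the union of a rational unramified curve with a conjugate pair of lines) exploits exactly this room to produce unramified curves with ovals and few branches. Your final step (a real Pl\"ucker/Brill--Segre count forcing $s=g+1$) is therefore also doomed, since the counterexamples violate the conclusion. The honest answer to this exercise is that the statement is a conjecture with known counterexamples for $n=3$, not a theorem to be proved.
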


Recently, a family of counterexamples to Huisman's conjecture has been
constructed for $n=3$ (see \cite{conjectures}). These counterexamples
explicitly contradict the bound found by Monnier in the case of
$g=2$. For our next examples, we briefly recall their construction. A
non-degenerate (i.e., not lying on any real hyperplane) curve $X
\subset \mathbb{P}^n$ is called $\emph{unramified}$ if, taken any real
hyperplane $H$, we have
\[ \text{wt}(H \cdot X) \leq n-1,\]
whereby the weight of the intersection divisor $H\cdot X$ is defined
to be
\[\text{wt}(H \cdot X) =\text{deg}\left(H\cdot X - \left(H \cdot X\right)_{\text{red}}\right),\]
i.e., the degree of the difference between the latter and the reduced
divisor (which contains each point of $H\cap X$ with multiplicity
exactly one). Given two univariate strictly interlacing polynomials
both of degree $d \in \mathbb{N}^{\ast}$, we embed the graph of their
fraction into $\mathbb{P}^3$ via the Segre map. We obtain an
unramified rational curve $C_1$ of degree $d+1$. To obtain a curve of
positive genus, we take a complex-conjugate pair of lines $C_2$ and
consider the union $Z=C_1 \cup C_2$. Taking $\epsilon >0$ small
enough, it is possible to make a small perturbation $Z_\epsilon$ such
that $Z_\epsilon$ becomes a real curve (in particular, we mean smooth
and irreducible) which is unramified. The degree of $Z_\epsilon$ is
$d+3$ and the genus is $2(d-1)$. Since these counterexamples depend on
a parameter $\epsilon >0$, one may wonder whether it is possible to
determine such an $\epsilon >0$ in practice. In the following example,
we reconstruct two such curves and determine different parameters
$\epsilon >0$, for which there exists (and for which there does not
exist) a simple totally real hyperplane section.

\begin{ex}\label{exvar}
For the first example, we consider the same polynomials as in
\cite[Ex.~3]{conjectures}. We obtain a curve of genus $4$, and degree
$6$, which has $1$ oval. In the second example, we construct a
hyperelliptic curve of genus $2$ and degree $5$, which has $1$
pseudo-line.\\

1. Let $q=x_0x_3+x_1x_2$ be the Segre quadric and consider the polynomials
$$ \mbox{\footnotesize $h= 3x_0^3+3x_0x_1^2-x_0^2x_2-3x_0x_2^2+x_2^3+4x_0^2x_3-x_0x_1x_3+4x_1^2x_3-x_2^2x_3-3x_0x_3^2+x_2x_3^2-x_3^3$}$$
and $p= x_0^3+x_1^3+x_2^3-x_3^3$. It is shown in \cite{conjectures} that the curve $X_\epsilon = \mathcal{V}_{+}\left(q, h+ \epsilon p\right)$ does not have a totally real hyperplane section for some small parameter $\epsilon>0$.
On the one hand, the algorithm shows that for $\epsilon =
2^{-4}$, there is a totally real hyperplane section. For example, we
can take the hyperplane
\begin{small}
\begin{align*}
H = -902330031190717857x_0 +& 1152921504606846976x_1\\
+ 323139221492926521 x_2 -& 590264337985175552x_3.
\end{align*}
\end{small}\ignorespacesafterend
On the other hand, for $\epsilon = 2^{-5}$, our algorithm
computes a $6\times 6$ Hermite matrix in three parameters. The
polynomial $\bm{w}_{\infty}$ has two factors: one is linear in the
parameters and the other is a univariate polynomial of degree $3$ in
one parameter.
The boundary polynomial $\bm{w}$ has degree $22$. Computing points per
connected component of the semi-algebraic set defined by $\bm{w} \ne
0$ takes about $4$ hours and does not return any point that gives the
Hermite matrix a signature $6$. 
\begin{figure}[ht]
  \centering
  \includegraphics[width=4cm]{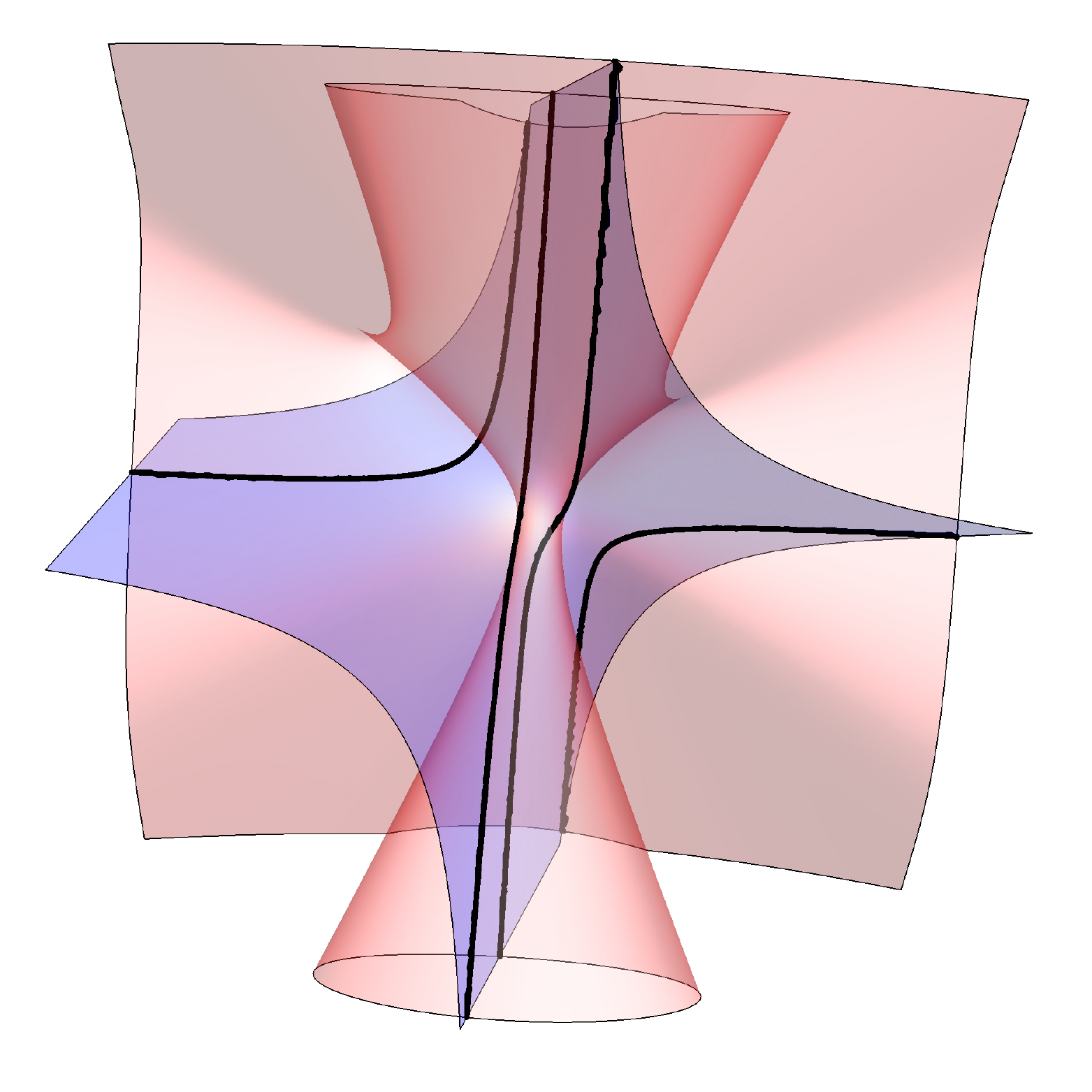}
  \qquad
  \includegraphics[width=4cm]{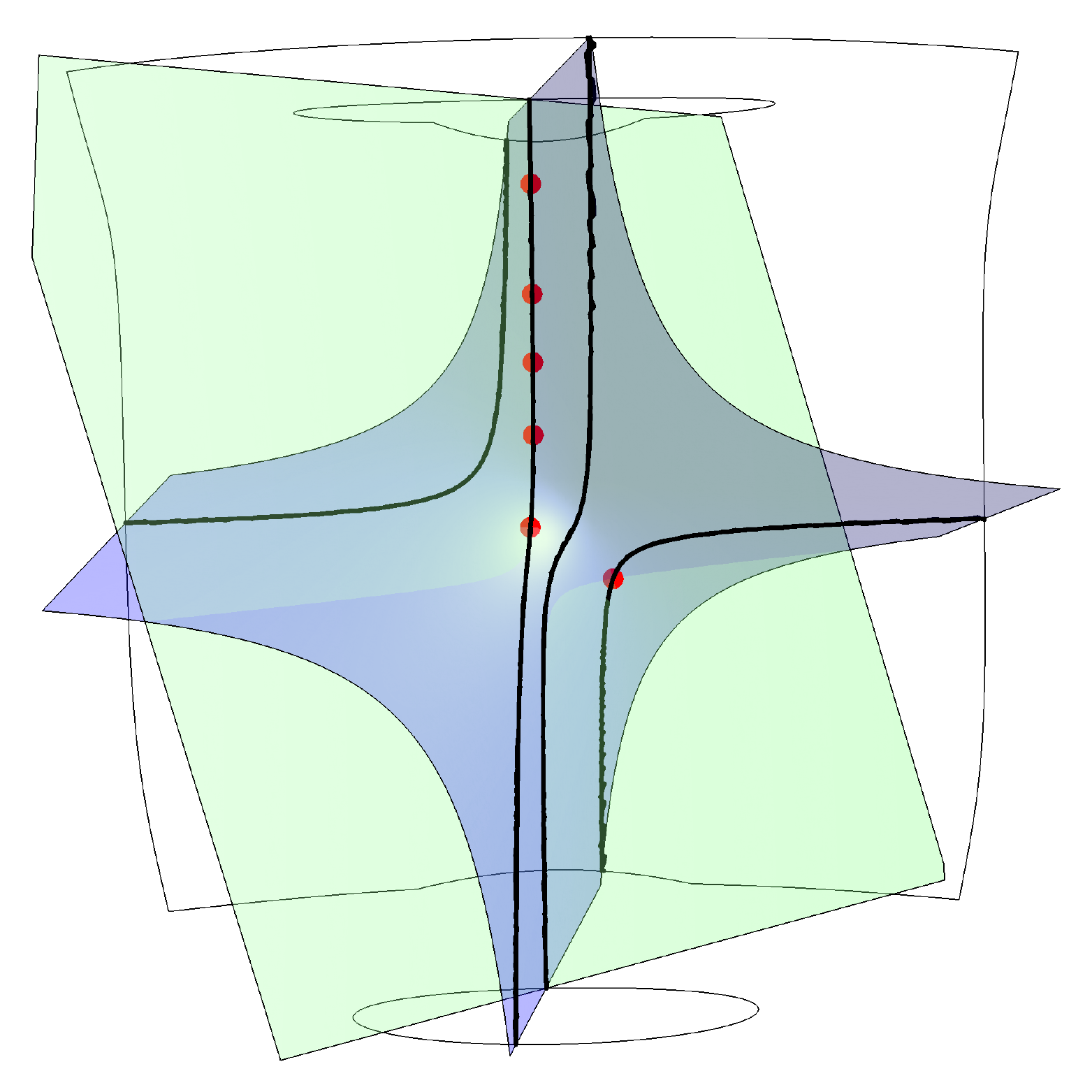}
  \qquad
  \includegraphics[width=4cm]{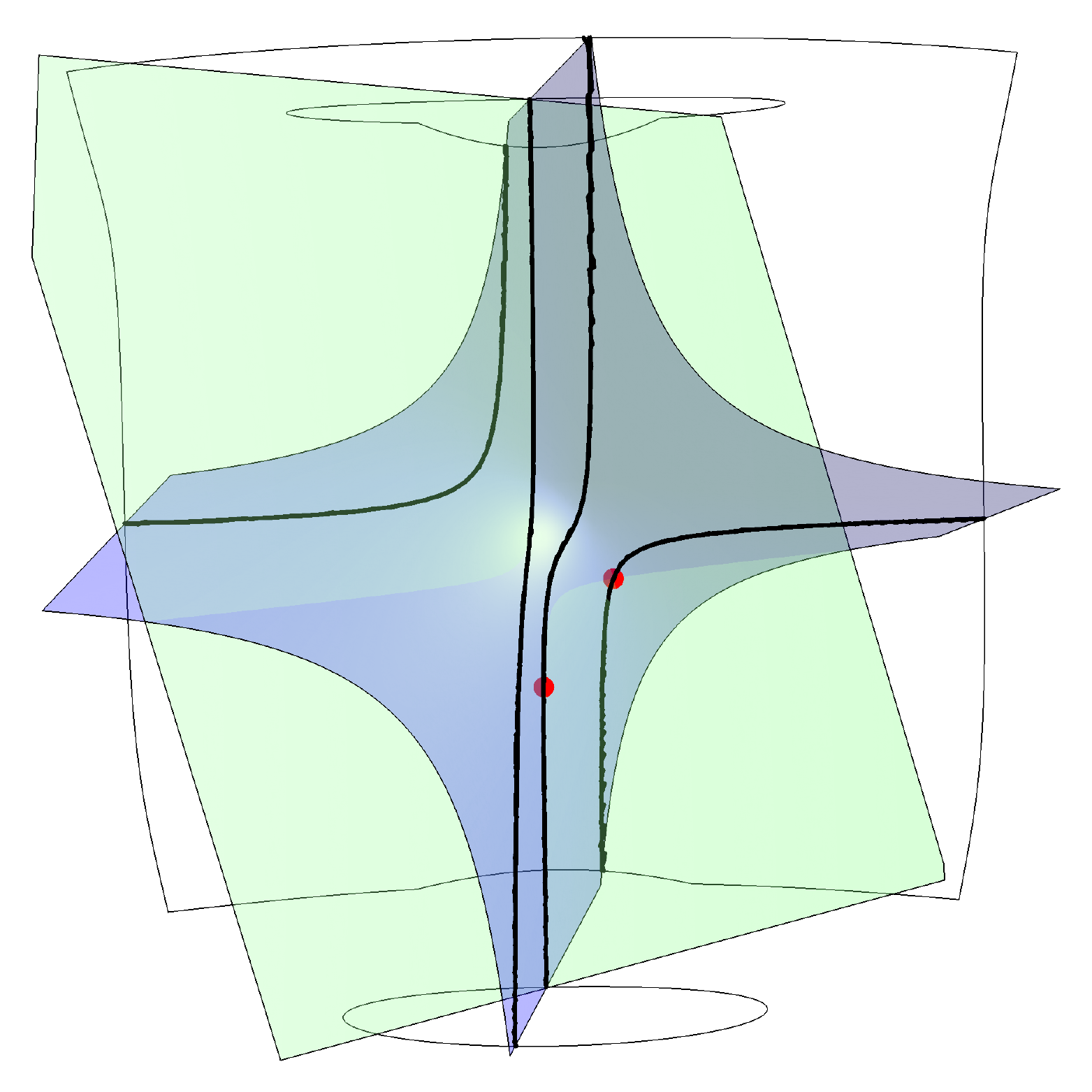}
  \caption{The curve $X_{2^{-4}}$; the intersections $X_{2^{-4}}\cap H$ and $X_{2^{-5}}\cap H$.}
\end{figure}

It remains to classify the solutions when the parameters are real
solutions of $\bm{w}_{\infty}$. For the linear factor, we simply
substitute one parameter by the others in the system to solve and use
the same algorithm (with one less parameter). Finally, we call our
algorithm over the algebraic extension by the univariate factor of
$\bm{w}_{\infty}$ to classify the solutions in this case. These
computations do not return any totally real hyperplane section. So, we conclude that $X_{2^{-5}}$ does not have any simple totally
real hyperplane section. Thus, we have $N'(X_{2^{-5}}) \geq 7$.\\

2. In general, if $X$ is a hyperelliptic curve, then it is known that
$N(X) \geq 2g-1$. If $X$ has at least $g$ branches, then equality
holds (see \cite[Cor.~6.4]{monnier}). Starting with homogeneous
strictly interlacing polynomials $P=y^2-2yx$ and $Q=y^2-x^2$ and
following \cite[Cons.~1]{conjectures}, we can construct a curve of
genus $2$, degree $5$ with $1$ pseudo-line and prescribed intersection
behaviour with any real hyperplane. To be precise, the polynomials
\begin{small}
\begin{align*}
q &= x_0x_3-x_1x_2,\\
f &= -x_0^2x_1-x_1^3+2x_0^2x_2-x_0x_2^2+2x_0x_1x_3 +x_0x_2x_3 -x_0x_3^2+x_1x_3^2,\\
g &= 2x_0x_2^2-x_2^3-x_0^2x_3-x_1^2x_3+x_2^2x_3+2x_0x_3^2-x_2x_3^2+x_3^3,\\
h_1 &= x_0^3+x_1^3+x_0x_2^2-x_1x_3^2,\\
h_2 &= x_0^2x_2+x_1^2x_3+x_2^3-x_3^3
\end{align*}
\end{small}\ignorespacesafterend
define parametrized curves $X_\epsilon = \mathcal{V}_+(q, f+ \epsilon
h_1, g+ \epsilon h_2)$ for $\epsilon >0$. For a small parameter
$\epsilon >0$, the curve $X_\epsilon$ does not have a totally real
hyperplane section. On the one hand, the algorithm shows that for
$\epsilon \in \lbrace 2^{-1}, 2^{-4} \rbrace$, there is a totally real
hyperplane section.

On the other hand, for $\epsilon =2^{-8}$, our algorithm
computes a $5\times 5$ Hermite matrix in three parameters with a
boundary polynomial $\bm{w}$ of degree $15$. Particularly, the
non-specialization polynomial $\bm{w}_{\infty}$ is a product of three
linear polynomials of the parameters. Computing the sample points for
the set defined by $\bm{w} \ne 0$ takes
$3$ minutes and returns no point which gives a signature $5$ to the
Hermite matrix.

When the parameters are real solutions of $\bm{w}_{\infty}$, which has
only linear factors, we substitute one parameter by
the others in the parametric system. This gives us new parametric
systems depending on only two parameters. Using the same algorithm, we
classify the solutions of these new systems and obtain no totally real
hyperplane section when $\bm{w}_{\infty} = 0$. So, we conclude that there is no simple totally real hyperplane
section for $X_{2^{-8}}$. Thus, we have $N'(X_{2^{-8}}) \geq 6$.
$\hfill\triangle$
\end{ex}

From the above examples, we also raise the question of determining the
largest value $\epsilon_0 \in \mathbb{R}$ such that, for any $\epsilon
\in ]0,\epsilon_0[$, the curve $X_{\epsilon}$ has no totally real
hyperplane section. This computation can also be carried out by the
algorithm we present in Section~\ref{Sec:Algorithms} but $\epsilon$ is
now considered as a parameter. However, the boundary polynomial
depends on $4$ indeterminates and has degree up to $35$. So, the
computation of sample points becomes much more difficult.

It remains an open problem to find (or disprove the existence) of a
curve $X$ of genus $2$ with $1$ branch satisfying $N(X)\leq
5$. Furthermore, it remains an unsolved task to find curves with the same
topological parameters, but different values for $N(X)$ or $N'(X)$.
\section{Plane quartics}\label{Sec:planequartics}
Let $X \subset \mathbb{P}^2$ be a plane quartic curve. If $X$ has many branches, i.e., if $s \in \lbrace 3,4 \rbrace$, we know that $4 \leq N(X) \leq  5$. We would expect $N(X)=5$, so we would like to have a possibility to check if certain linear series of degree $4$ do not contain a totally real divisor. The general expectation is $N(X)=2g-1$ for curves of genus $g$ having many branches (see \cite[p.~92]{huisman1}). If $D$ is a divisor of degree $4$ on $X$ having odd degree on at least one branch of $X$, then $\vert D \vert$ can be shown to be totally real. Hence, we are interested in divisors of degree $4$ having even degree on every branch. For such a divisor $D$, there are two possibilities. If $D$ is special, then $\vert D \vert$ is the canonical linear series and must be totally real. If $D$ is non-special, then $\vert D \vert$ defines a morphism to $\mathbb{P}^1$ and in particular, $D$ cannot be very ample. With the help of the algorithm, we are able to check whether each fibre of $X \rightarrow \mathbb{P}^1$ contains a complex-conjugate pair.

If the plane quartic curve $X$ has $s \in \lbrace 1,2 \rbrace$ ovals, we would like to consider very ample divisors of high degree, which give an embedding into a high-dimensional projective space. In this case, we need to check whether the hyperplane linear series of the embedded curve is totally real. For the computations, one can use the divisor package \cite{divisorpackage} in Macaulay2 \cite{M2}.

\begin{rem}
Given a plane quartic curve $X$ with only one oval, no upper bound for
$N(X)$ is known. For two ovals, it is possible to conclude $N(X) \leq
9$ under the assumption of an unsolved case of \Cref{conjecture}. In
particular, it is interesting to check whether every divisor of degree
$10$ defines a totally real linear series. If not, a new case of the
conjecture is disproved. Since divisors of degree $9$ on plane quartic
curves are very ample, one can use the aforementioned divisor package
in Macaulay2 to compute the embedding into a high-dimensional
projective space. Then, one can check the (non-)existence of a totally
real hyperplane section of the image curve.
\end{rem}

If we take a plane quartic curve $X$ (with $s \in \lbrace 3,4
\rbrace$ branches) and a special divisor $D$ of degree $4$,
then the linear series $\vert D \vert$ defines a morphism $\varphi : X
\rightarrow \mathbb{P}^1$. Using the algorithm, we can check whether
there exists a real point $[c:d] \in \mathbb{P}^1(\mathbb{R})$ which
has a totally real fibre. If so, the linear series $\vert D \vert$ is
totally real. If there is no such a point, then $\vert D \vert$ is not
totally real.

By dehomogenizing the projective point $[c:d]$, our algorithm
is reduced to solving a polynomial system depending on one
parameter. Thus, for these examples, we can obtain a complete root
classification of the system by the additional steps using root
isolating algorithms as mentioned at the end of
Section~\ref{Sec:Algorithms}

\begin{ex}\label{quarticex}
We continue with plane quartic curves with many branches and consider divisors of degree $4$.\\

1.~We can use the method described above to get a lower bound for
$N(X)$ on the curve $X=\mathcal{V}_+(x^4+y^4-z^4)$. The linear
series of lines is an example for a linear series which contains a
totally real divisor, but does not contain a simple totally real
one. Hence, we have $N'(X) \geq 5$. We consider the divisor 
\[D=[1:0:1]+[0:1:i]^{\sigma} + [0:1:1]\]
which defines a morphism 
\[X \rightarrow \mathbb{P}^1,\quad [x:y:z] \mapsto
[xy+xz-yz-z^2:x^2-xz].\]
The algorithm shows that there is no totally real fibre. Even more,
each fibre has of at most $2$ real points. Hence, we have $N(X) \geq
5$.\\

2.~In this example, we construct an explicit plane quartic curve with three ovals and a base-point-free linear series of degree four which is not totally real. Generally, if $X$ is a plane quartic curve and $D$ is a special divisor of degree $4$, then the morphism to $\mathbb{P}^1$ is given by conics. Since the intersection of a quartic and a conic consists of eight points (counted with multiplicity), linear equivalence within $\vert D\vert$ is given by a fraction of two conics having four points in common. Conversely, fixing four (real) points on $X$, we may consider the set of conics going trough these points. The four residual points define a linear series of degree $4$. Our goal is to find a linear series which is not totally real. First, we construct a plane quartic curve $X$ with the desired topology. (There are several ways to achieve this; we use a linear determinantal representation and exploit the relation between the Cayley octad, the number of real bitangents, and the number of branches of $X$; see \cite{cayley}). For example, we can take the equation of $X$ to be
\begin{small}
\begin{align*}
f &= 9x^4 - 30x^3y + 161x^2y^2 - 116xy^3 - 8y^4 + 46x^3z - 80x^2yz + 202xy^2z\\
&-  116y^3z + 59x^2z^2 - 80xyz^2 + 185y^2z^2 -6xz^3 - 50yz^3 - 11z^4.
\end{align*}
\end{small}\ignorespacesafterend
Next, we take the circle $c=x^2 + \left(y - \frac{z}{10}\right)^2 -  \frac{2z^2}{10}$ and fix the four real intersection points.
\begin{figure}[ht]
	\centering
  \includegraphics[width=5cm]{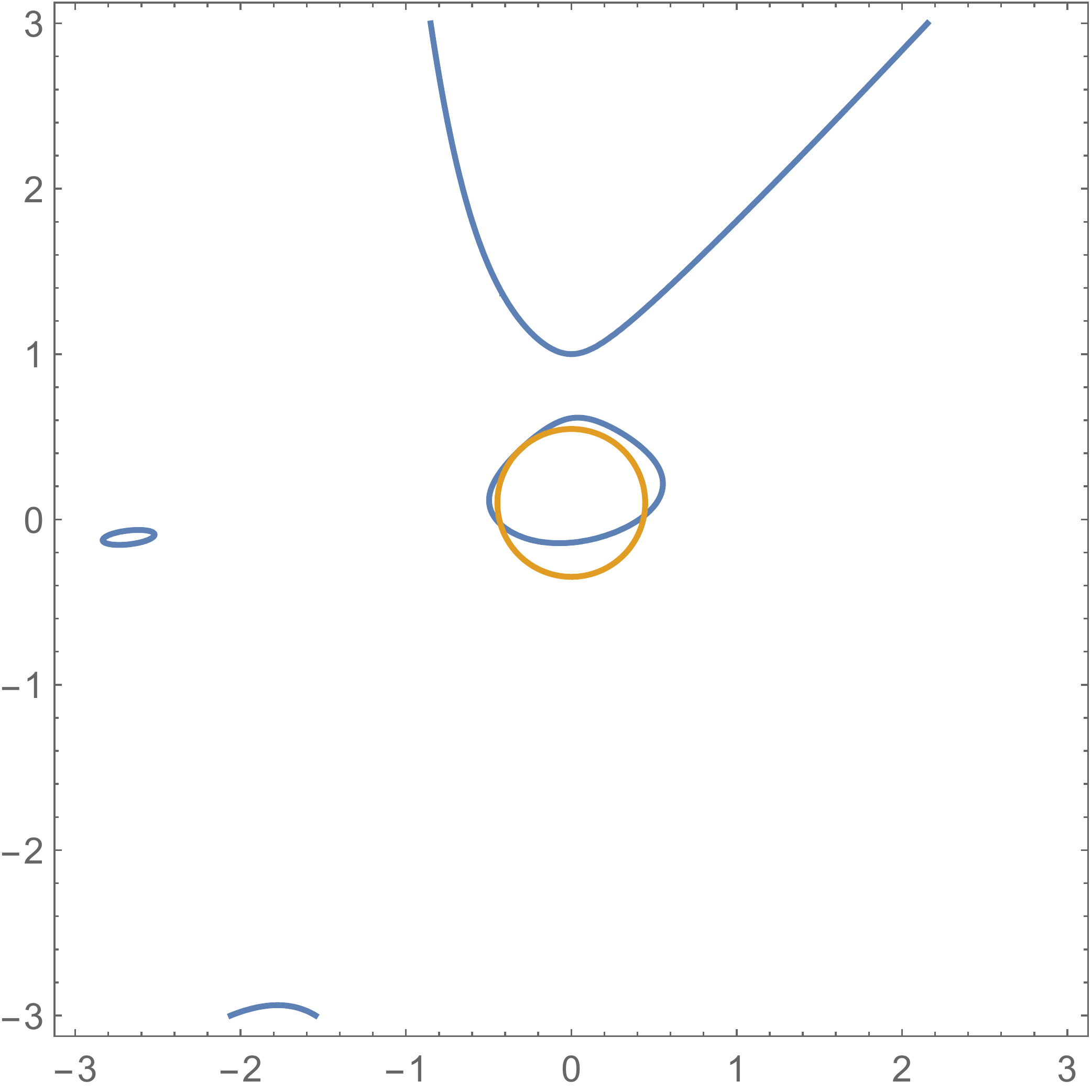}
	\caption{The plane quartic $X$ and the circle $c$.}
\end{figure}
\noindent The real vector space $V=\text{Lin}(Q_1,Q_2)$ of conics
through these points is generated by
\begin{small}
\begin{align*}
Q_1 &= 0.31100521007570264x^2 - 0.4569339120067826xy \\ &+0.7395296982938114y^2 + 0.01692042897825057xz\\
&- 0.3797243325905672yz - 0.05573253113981307z^2,\\
Q_2 &= 0.7303803360779876x^2 + 0.5870985535950933xy\\ &+0.17978406689755905y^2 - 0.021740473005624657xz\\
&+0.2618986086207364yz - 0.14308743118437495z^2.
\end{align*}
\end{small}\ignorespacesafterend
The computational problem is to check whether there is a conic
in $V$ intersecting $X$ in only real points. As in the first example,
we solve a polynomial system of one parameter using the
algorithm of Section~\ref{Sec:Algorithms}.

We start by computing a Hermite matrix of size $8\times 8$ and a
boundary polynomial $\bm{w}$ of degree $24$ ($\deg \bm{w}_{\infty} =
4$, $\deg \bm{w}_{\cH} = 20$). Each fiber over the semi-algebraic set
defined by $\bm{w}\ne 0$ contains $8$ distinct complex points but at
most $6$ real points. 

Next, we isolate the real solutions of $\bm{w}_{\cH}$ and evaluate the
signs of the leading principal minors of $\cH$ at those
solutions. These sign patterns allow us to count the number of real
and complex points at the real solutions of $\bm{w}_{\cH}$. This
handles the case when the parameter takes values that satisfy
$\bm{w}_{\cH} = 0$. For the vanishing locus of $\bm{w}_{\infty}$, we
call the algorithm over its associated algebraic
extension. In both of these cases, we do not find any totally real fiber.

So, our algorithm shows that there is no conic in $V$ intersecting $X$
in real points only. Hence, taking the four residual points of any
intersection $Q \cdot X$ with $Q \in V$ (i.e., leaving the four fixed
points out), we get a divisor of degree four which does not define a
totally real linear series. Furthermore, this linear series is
base-point-free. The plane quartic $X$ is an explicit example where the
bound $N(X)=5$ is determined.\\

3.~Analogously, we can consider the plane quartic curve $X$ defined by
\begin{small}
\begin{align*}
f &= (81x^4)/4 - (135x^3y)/4 + (1953x^2y^2)/16 + (297xy^3)/2 + 69y^4\\
&+ (9x^3z)/2 + (57x^2yz)/2 + (431xy^2z)/8 - (85y^3z)/6 - (179x^2z^2)/4\\
&+ (67xyz^2)/2 - (4685y^2z^2)/48 - (16xz^3)/3 - (1433yz^3)/36 + (917z^4)/36.
\end{align*}
\end{small}\ignorespacesafterend
\noindent The curve $X$ consists of four ovals. 
\begin{figure}[ht]
	\centering
  \includegraphics[width=5cm]{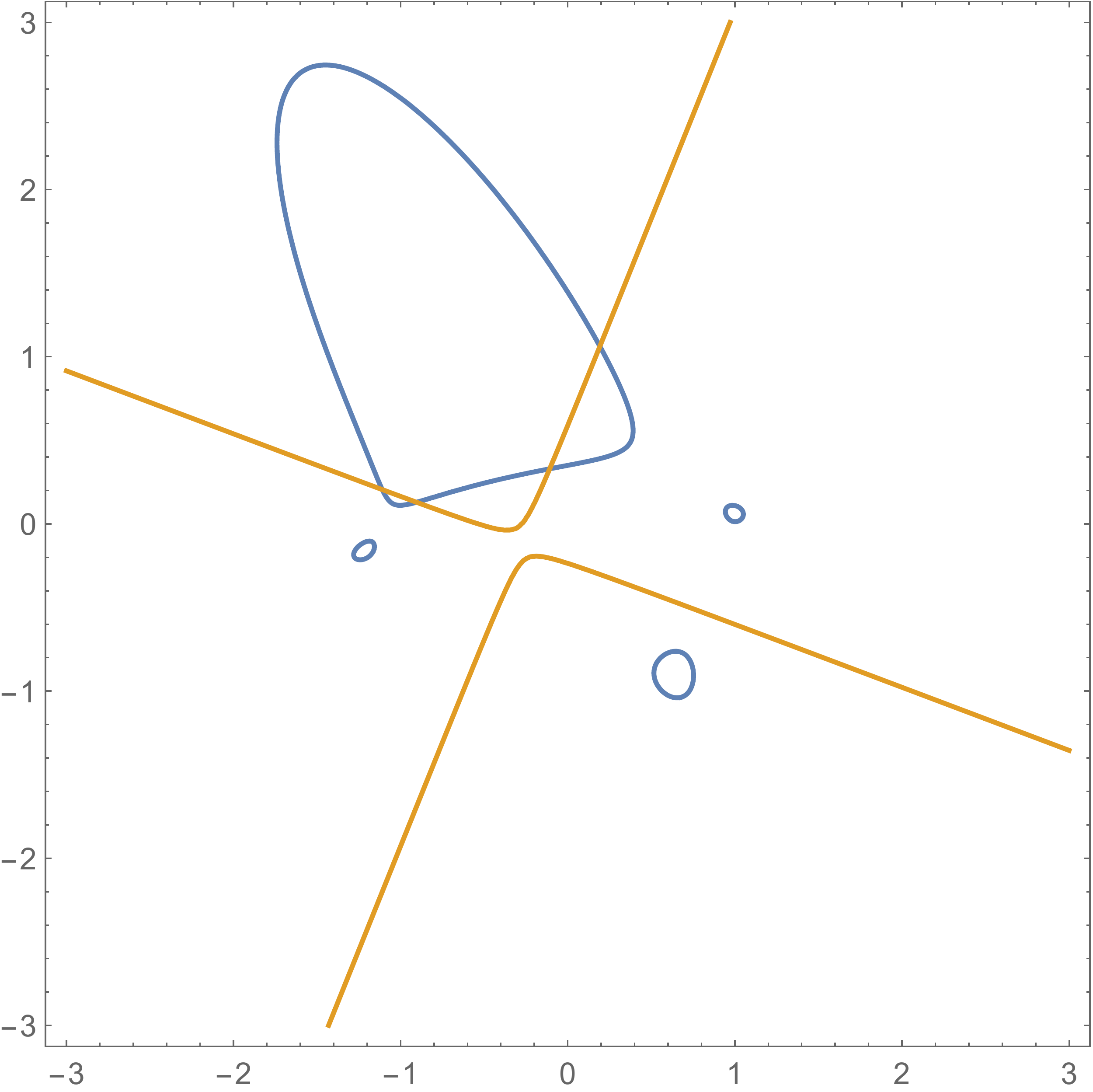}
	\caption{The plane quartic $X$ and conics going through four fixed points.}
\end{figure}
\noindent Summing up, the conics
\begin{small}
\begin{align*}
Q_1 &= 0.47127272928773783x^2 + 0.6598453341260914xy\\
&- 0.13447226903447518y^2 + 0.4868883263821278xz\\ 
&- 0.24467908024400253yz + 0.16581695886185108z^2\\
Q_2 &= -0.09774545786950306x^2 + 0.4442913360602867xy \\
&- 0.5056096052652832y^2 - 0.2532574091360106xz \\
&+ 0.6653828276536204yz - 0.17474649814093252z^2
\end{align*}
\end{small}\ignorespacesafterend
define the real vector space through the four fixed real points.

In this example, our algorithm computes a Hermite matrix $\cH$ of size
$8\times 8$ and a boundary polynomial $\bm{w}$ of degree $20$
($\bm{w}_{\infty} = 1$, $\deg \bm{w}_{\cH} = 20$). Again, the
algorithm shows that there is no conic in this vector space
intersecting $X$ in real points only. Hence, we have $N(X)=5$. $\hfill
\triangle$
\end{ex}

By perturbing the equation of the quartics (and the circles, if necessary), we get infinitely many plane quartics with many components where the real divisor bound is determined.

Increasing the degree, we may ask whether a plane quintic curve $X$
always possesses a totally real line section. If $X$ has $s\geq3$
branches, then there must be exactly one pseudo-line and $s-1$
ovals. Taking a line through two points on two distinct ovals, we
automatically get a totally real line section. Furthermore, we can
conclude that the canonical series is totally real. If $X$ has $s \leq
2$ branches, then the question whether a plane quintic curve possesses
a totally real line section is related to the so-called undulation
invariant (see \cite[Thm.~6.2]{undulation}). Generally, it is possible
to construct plane curves with prescribed topological properties that
have a totally real line section.

\begin{thm}\label{Thm:RealHarnack}
For every $d \geq 3$ and every number $1 \leq s \leq g+1$ with
$g=\frac{(d-1)(d-2)}{2}$, there exists a plane curve $X$ of degree
$d$, genus $g$ and having $s$ branches such that the linear series of
lines $\vert L \vert$ is totally real.
\end{thm}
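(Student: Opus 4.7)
The plan is to start with an $M$-curve carrying a totally real line section, and to iteratively merge pairs of branches while preserving this section; each merger reduces $s$ by one, so any value in the Harnack range is eventually attained.

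For the base case $s = g+1$, we invoke the existence of a Mikhalkin-type Harnack curve of degree $d$: a smooth real plane $M$-curve whose restriction to each coordinate axis is a reduced divisor of $d$ real points. Any coordinate axis $\ell$ then provides a simple totally real element of $|L|$.

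For $s < g+1$, we induct on $g+1-s$. Suppose we have a smooth real plane curve $X_0$ of degree $d$ with $s'+1 \geq 2$ branches and $\ell \cdot X_0$ totally real. We construct a real one-parameter family $\{X_t\}_{t \in [0,1]}$ of plane curves of degree $d$, smooth for $t \neq t_0$, with $X_{t_0}$ acquiring a single real node $p \notin \ell$ at which two distinct branches of $X_0$ have been brought into contact. Crossing $X_{t_0}$ in the \emph{merging} direction (as opposed to the splitting direction), we obtain a smooth $X_1$ with $s'$ branches. The existence of such a merger deformation is a standard consequence of Brusotti's theorem on independent smoothings of real nodal curves, and the merger point $p$ can be placed off $\ell$ because $\ell \cap X_0$ is a finite set while the branches are one-dimensional.

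The totally real line section survives the deformation: since $p \notin \ell$, the divisor $\ell \cdot X_t$ on $\ell \cong \mathbb{P}^1$ varies smoothly in $t$, so the reality and transversality of $\ell \cdot X_0$ persist to $\ell \cdot X_1$. The main obstacle is the deformation-theoretic step, namely verifying that Brusotti's theorem can be applied with the node placed at a prescribed point off $\ell$ and the smoothing chosen in the merging direction; we also need the two branches participating in the merger to lie in distinct connected components of $X_0(\mathbb{R})$, which is always possible since $s'+1 \geq 2$. An alternative, more explicit route is via Viro's patchworking: choose the Newton polygon and sign data so that the edge corresponding to $\ell$ encodes $d$ real intersection points, and adjust the internal sign pattern to tune the branch count between $1$ and $g+1$.
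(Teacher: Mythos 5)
Your base case coincides with the paper's starting point (Harnack's $M$-curves admit a line meeting them in $d$ distinct real points), but your inductive step contains a genuine gap. Brusotti's theorem is a statement about \emph{smoothing}: given a real nodal curve, every combination of independent local smoothings of its nodes is realized by a small perturbation. It does \emph{not} assert the existence of a \emph{degeneration} of a given smooth curve $X_0$ that brings two prescribed branches into contact at a single real node while keeping the rest of the curve smooth. That existence statement is exactly the content you would need, and it is not standard: the chamber structure of the complement of the discriminant in the space of degree-$d$ forms is complicated (this is the subject of rigid isotopy classifications), and one cannot in general prescribe which wall a path will cross, nor that the wall corresponds to a one-nodal curve whose node has two real local branches lying on two distinct global components of $X_0(\R)$. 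A second, smaller gap: the path from $X_0$ to $X_1$ is not a small perturbation (moving two ovals into contact is a large deformation), so the divisor $\ell\cdot X_t$ can degenerate along the way --- two of its real points can collide at a tangency of $\ell$ with some $X_t$ and emerge as a complex-conjugate pair --- and excluding the node from $\ell$ does not prevent this.

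The paper's proof sidesteps both problems by applying Brusotti only where it genuinely applies. In Harnack's construction the degree is raised by perturbing the \emph{nodal} curve $C_{d-1}\cup L$, where $L$ meets one component of $C_{d-1}$ in $d-1$ real points; Brusotti lets one choose the smoothing at each of these $d-1$ nodes independently, and different choices of the arcs yield different numbers of connected components of the resulting smooth degree-$d$ curve. Carrying this choice through each stage of the recursion realizes every $s$ with $1\le s\le g+1$, and the line $L$ automatically meets the perturbed curve transversally in $d$ distinct real points because the perturbing term restricted to $L$ has $d$ real zeros. If you want to keep an induction on the number of branches, you should therefore induct on the nodal models inside Harnack's construction rather than on smooth curves. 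Your closing suggestion via combinatorial patchworking is a legitimately different route and could be made to work, but as written it is only a sketch: one would have to verify that, with the boundary data on the relevant edge fixed, the interior sign distributions realize every component count in the Harnack range.
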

\begin{proof}
First, we use the method for constructing curves introduced by Harnack \cite[pp.~193-196]{harnack}. For $d=3$, the statement is obvious. Given any $d\geq 4$, he constructs a smooth plane $M$-curve of degree $d$ such that there is a line $L$ intersecting a single component of $X$ in $d$ distinct real points. In the process of constructing the $M$-curve of degree $d$ out of the previous one (of degree $d-1$), we use the classical small perturbation theorem (see \cite[Thm.~3.5]{smallperturbation}), which is originally due to Brusotti \cite{brusotti}. Given the line $L$ and the transversal intersection points with the $M$-curve of degree $d$, we can thus choose the shape of the arcs when smoothing the nodal points. Hence, we can obtain any number of connected components while keeping a line intersecting the resulting curve of degree $d+1$ in $d+1$ distinct real points.
\end{proof}

\begin{cor}
For every $d \geq 4$ and every number $1 \leq s \leq g+1$ with $g=\frac{(d-1)(d-2)}{2}$, there exists a plane curve $X$ of degree $d$, genus $g$ and having $s$ branches such that the canonical series $\vert K \vert$ is totally real.
\end{cor}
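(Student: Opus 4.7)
The plan is to deduce the corollary directly from Theorem \ref{Thm:RealHarnack} by invoking the adjunction formula for smooth plane curves. Recall that for a smooth plane curve $X \subset \mathbb{P}^2$ of degree $d$, the canonical class is cut out by curves of degree $d-3$; in particular, $K_X$ is linearly equivalent to $(d-3)L$, where $L$ denotes the class of a line section. This already identifies a natural candidate for a totally real canonical divisor, namely $(d-3)$ times a totally real line section.

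More concretely, the first step is to apply Theorem \ref{Thm:RealHarnack} with the given parameters $d$ and $s$, obtaining a smooth plane curve $X$ of degree $d$, genus $g = \frac{(d-1)(d-2)}{2}$, having exactly $s$ branches, together with a real line $L_0$ such that the intersection divisor $D \coloneqq L_0 \cdot X$ is totally real (in fact consisting of $d$ distinct real points). The second step is to form the effective divisor $(d-3)D$: it has degree $d(d-3) = 2g-2$ and its support is contained in $\mathrm{supp}(D) \subset X(\mathbb{R})$, hence is totally real in the sense of Section \ref{sec:prelim} (where multiplicities play no role in the definition). The final step is to note that, by adjunction, $(d-3)D$ lies in $|(d-3)L| = |K|$, so the canonical linear series contains a totally real effective divisor, as required.

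Since the bulk of the work is carried out by Theorem \ref{Thm:RealHarnack}, no substantial obstacle remains. The only minor points to verify are that the curve produced by Harnack's construction is smooth (so that adjunction applies and $K \sim (d-3)L$ holds on the nose), and that the condition $d \geq 4$ in the corollary aligns with $d - 3 \geq 1$, making $(d-3)D$ a genuine effective divisor. Both points are immediate: smoothness is ensured by Brusotti's perturbation theorem used in the proof of Theorem \ref{Thm:RealHarnack}, and for $d = 4$ the corollary reduces to Theorem \ref{Thm:RealHarnack} itself, since then $|K| = |L|$.
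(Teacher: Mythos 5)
Your proof is correct and is exactly the deduction the paper intends (the corollary is stated without proof, but the only natural route is the one you take): since $K\sim (d-3)L$ on a smooth plane curve of degree $d$, the divisor $(d-3)(L_0\cdot X)$ is a totally real member of $\vert K\vert$, and your degree check $d(d-3)=2g-2$ and the remark that multiplicities are irrelevant to total reality are the right points to verify.
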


One may ask whether it is possible to construct a plane curve $X$ of degree $d \geq 6$ with prescribed topological behaviour such that the linear series of lines is not totally real.

Finally, we remark that our algorithm also works for singular curves. In the case of a singular curve, we only allow the support of the divisors to be contained in the regular locus (see \cite{singularmonnier} for details) , hence it is possible to look for (generic) simple totally real hyperplane sections.

  \newpage
\end{document}